\definecolor{citation}{rgb}{0.2,0.58,0.2} 
\definecolor{formula}{rgb}{0.1,0.2,0.6}
\definecolor{url}{rgb}{0.3,0,0.5}
\newcommand{\reqnomode}{\tagsleft@false}
\newcommand\R{\mathbb{R}}
\DeclareRobustCommand*{\bfseries}{%
  \not@math@alphabet\bfseries\mathbf
  \fontseries\bfdefault\selectfont
  \boldmath
}
\newlength{\defbaselineskip}
\newtheorem{theorem}{Theorem}
\newtheorem{corollary}{Corollary}[section]
\newtheorem{definition}{Definition}
\newtheorem{remark}{Remark}[section]
\newtheorem{lemma}{Lemma}[section]
\numberwithin{equation}{section}
\newcommand\eps\varepsilon
\def\eqn#1$$#2$${\begin{equation}\label#1#2\end{equation}}
\newcommand{\be}{\begin{equation}}
\newcommand{\ee}{\end{equation}}
\newcommand{\G}{\mathcal{G}}
\newcommand{\dv}{\operatorname{div}}
\newcommand{\RN}{{\mathbb{R}^{N}}}
\newcommand{\rn}{{\RN}}
\newcommand{\rpp}{{[0,\infty)}}
\def\name[#1, #2]{#1 #2}
\newcommand{\opA}{{\mathcal{A}}}
\newcommand{\V}{{\mathcal{V}_G}}
\newcommand{\wt}{\widetilde}
\begin{document}
\title[Controlling monotonicity of nonlinear operators]{Controlling monotonicity of nonlinear operators}

\author[Borowski]{Micha\l{} Borowski}   \address{Micha\l{} Borowski\\Faculty of Mathematics, Informatics and Mechanics, University of Warsaw\\ul. Banacha 2, 02-097 Warsaw, Poland}\email{\texttt{m.borowski@mimuw.edu.pl}} 
\author[Chlebicka]{Iwona Chlebicka} \address{Iwona Chlebicka\\Faculty of Mathematics, Informatics and Mechanics, University of Warsaw\\ul. Banacha 2, 02-097 Warsaw, Poland} \email{\texttt{i.chlebicka@mimuw.edu.pl}}

\subjclass[2020]{26D20, 35A25\vspace{1mm}} 

\keywords{convex functions, $p$-Laplacian, monotonicity of operators, operators of nonstandard growth, vectorial inequalities\vspace{1mm}}

\thanks{{\it Acknowledgements.}\ {M.~B. is supported by the Ministry of Science and Higher Education project Szko\l{}a Or\l{}\'o{}w, project number 500-D110-06-0465160, I.~C. is supported by NCN grant 2019/34/E/ST1/00120.} 
\vspace{1mm}}

\maketitle

\begin{abstract} Controlling the monotonicity and growth of Leray--Lions' operators including the $p$-Laplacian plays a fundamental role in the theory of existence and regularity of solutions to second order nonlinear PDE. We collect, correct, and supply known estimates including the discussion on the constants. Moreover, we provide a comprehensive treatment  of related results for operators with Orlicz growth. We pay special attention to exposition of the proofs and the use of elementary arguments.
\end{abstract} 

\section{Introduction} One of the most important branches in the modern nonlinear analysis is a study of the properties of solutions to PDE involving Leray--Lions' monotone operators~\cite{LL} with a preeminent example of the   $p$-Laplace operator
\[-\Delta_p u=\dv(|Du|^{p-2}Du),\quad 1<p<\infty,\]
and its weighted counterpart $-\Delta_p^a u=\dv(a(x)|Du|^{p-2}Du)$ with $0\ll a\in L^\infty$.  The ellipticity of such operators depends on the value of a parameter $p$. When $p=2$, then the operator  $\Delta_p=\Delta$ is the classical Laplacian.  Moreover, the $p$-Laplace operator is called degenerate when $p>2$ and singular when $1<p<2$. Typically different methods are applied in these cases \cite{Lindqvist}. This is a~major obstacle to be overcome in the study of the operators of general growth such as~\eqref{eq:A}.

Controlling the monotonicity and the growth of operators like $\Delta_p$ or $\Delta_p^a$ has a preeminent place in the theory of existence, regularity, and other properties of solutions to nonlinear PDE involving these operators. To give a flavour of a vast literature where such tools are applied we refer to~\cite{LL,BM,MT,Db,Db2,hekima,KM,Min2007,simon,Uhl}. We point out that in a study of regularity of minimizers to a related variational problem 
\[\inf_{u\in u_0+W^{1,p}_{loc}}\int |Du|^p\,dx\] similar estimates are frequently employed as well, see e.g.~\cite{giaquinta,hamburger}. Going into more detail, a fundamental tool in the study of the properties of solutions to the mentioned nonlinear PDE involving Leray--Lions' operators and minimizers is a precise control over the monotonicity rate described by the quantity
\begin{equation}
    \label{mono-p}
\mathcal{J}_p(\xi,\eta):=\left\langle |\xi|^{p-2}\xi-|\eta|^{p-2}\eta,\xi-\eta\right\rangle\qquad\text{for }\ \xi,\eta\in\rn,
\end{equation}
where $\langle \cdot,\cdot\rangle$  denotes a scalar product. The quantity $\mathcal{J}_p$ appears when one subtracts two differential problems involving the $p$-Laplacian tested against the difference of the solutions. This procedure is typical in the regularity theory. It can be applied in order to infer regularity of solutions to problems with less regular data by comparison with solutions to problems involving more regular data, see~\cite[Section~4]{Min2007}. Another  example is a perturbation method based on a comparison between a~solution to a weighted problem and a solution to a related unweighted one, see  \cite[Theorem~3.8]{HanLin}.

In the further parts of the paper, we prove a variety of lemmas providing estimates of $\mathcal{J}_p(\xi,\eta)$.  We are inspired by the proofs from~\cite{BIV,Db,Db2,Lindqvist,simon}, which are here supplied with details or fixed when necessary. It is typical to provide different methods of the proof of estimates of $\mathcal{J}_p(\xi,\eta)$ for $p<2$ and $p>2$, see Lemmas~\ref{lem:p-male} and \ref{lem:p>2} here, and \cite{Lindqvist}. It can be particularly useful to have one function to control the monotonicity in the full range $1<p<\infty$. For this purpose we make use of the following auxiliary functions\begin{equation}\label{Vp}
    \bar V_p^\mu(\xi):=(\mu+|\xi|^2)^\frac{p-2}{4}\xi,\ \mu\geq0, \qquad\text{ and }\qquad V_p(\xi):=|\xi|^\frac{p-2}{2}\xi.
\end{equation}
Clearly, $\bar V_p^0(\xi)=V_p(\xi)$. Such functions are used to express differentiability properties of gradients of solutions to problems like $-\Delta_p u=0$, cf.~\cite{Min2007,Uhl}. 

Following the pioneering  contributions~\cite{Ta79,Go74,Li} and some more recent ones~\cite{baroni,CGZG-Wolff,CYZG,CiFu,CiMa,CiMa2,DiEt,DiLeStVe} we are interested in a class of operators embracing $\Delta_p$ given by
\begin{equation}
    \label{eq:A}\mathfrak{A}[u]:=-\dv \opA (x,Du)=-\dv\left(a(x)\frac{G(|Du|)}{|Du|^2}Du\right),
\end{equation} 
where $G$ is a certain $N$-function, which is similar to a power function in the sense that $G(0)=0$, $G$ is convex,  and satisfies growth conditions of doubling type called $\Delta_2$ and $\nabla_2$-condition (cf. Definition~\ref{def:D2}). As for regularity of $G$, we need only $G\in C(\rpp)\cap C^1((0,\infty))$ and we denote its derivative  $g=G'$. For the choice of $G(t)=t^p$ and $a\equiv 1$ we are back in the $p$-Laplace case. Apart from the power functions, the typical examples are Zygmund functions $G(t)=t^p\log^\alpha(e+t)$, $p>1$ and $\alpha\in\R$ together with their compositions and multiplications of their compositions. Functions satisfying both $\Delta_2$ and $\nabla_2$-conditions are trapped between power functions with exponents $i_G$ and $s_G$, called indices, see Lemma~\ref{lem:iG-sG}. For more information on convex functions, growth conditions, and Orlicz spaces we refer to~\cite{KR,rao-ren}. In order to analyze solutions of PDE involving operator $\mathfrak{A}$ from~\eqref{eq:A}, one may need to estimate the following expression describing monotonicity 
\begin{equation}
    \label{mono-G} \mathcal{J}_G:=\left\langle \frac{G(|\xi|)}{|\xi|^2}\xi-\frac{G(|\eta|)}{|\eta|^2}\eta,\xi-\eta\right\rangle \qquad\text{for $\xi,\eta\in\rn$ and $\xi,\eta\neq 0$.} \end{equation}
 Note that \eqref{mono-G} is a general growth version of \eqref{mono-p}. One of the ideas to by-pass the challenge resulting from the fact that in the Orlicz case there is no natural threshold like $p=2$ in the power growth case is to prove estimates on $\mathcal{J}_G$ with the use of a few different auxiliary functions, see e.g.~\cite{DiEt}. In a direct relation to functions from \eqref{Vp} we define \begin{equation}
    \label{V}\V(\xi):=\frac{\sqrt{G(|\xi|)}}{|\xi|}\xi\qquad\text{for $\xi\in\rn$ and $\xi\neq 0$.}
\end{equation} 
In the case of $G(t)=t^p$, we consider $V_p$ and $\bar V_p^\mu$ given by~\eqref{Vp}. Then we have $V_p(\xi)=\bar{V}_p^0(\xi)=\V(\xi).$ Moreover, let\begin{equation}
    \label{G-shift}G_a(t):=\int_0^t \frac{g(a+s)}{a+s}s\,ds \qquad\text{for $a,t>0$ }
\end{equation}
and \begin{equation}
    \label{cG}\G(\xi,\eta):=\begin{cases}G(|\xi-\eta|),&|\xi-\eta|\geq |\xi|,\\
    \tfrac{G(|\xi|)}{|\xi|^2}|\xi-\eta|^2,& |\xi-\eta|\leq |\xi|,
    \end{cases} \qquad\text{for $\xi,\eta\in\rn$ and $\xi\neq 0$.}
\end{equation}  
Our aim is to provide upper and lower bound for $\mathcal{J}_G$ by the same quantities involving $\V$, $G_a$, and $\G$. The estimates shall hold up to a multiplicative constant dependent only on indices $i_G$ and $s_G$. We denote  by $a\lesssim b$ that a function $a$ is dominated by a function $b$ up to a constant. We write $a\approx b$, if $a\lesssim b$ and $b\lesssim a$. In order to stress dependence of the intrinsic constant on certain parameters we may abbreviate the notation by placing them in the lower index, e.g. $\lesssim_{i_G}$ or $\approx_p$. Some of the estimates were already known, see e.g.~\cite{DiEt,DiLeStVe}. More precise references are given as comments to particular lemmas. We provide new elementary proofs not requiring $G$ to be twice differentiable. Our main result reads as follows.

\begin{theorem}\label{theo:main}
If $G\in C(\rpp)\cap C^1((0,\infty))$ is an $N$-function such that $G\in\Delta_2\cap\nabla_2$, $g=G'$, $\V$ is given by~\eqref{V},  $G_a$ by~\eqref{G-shift},  $\G$ by~\eqref{cG}, then we have
\begin{equation}
    \begin{split}\label{equiv-theo:main}
 |\V(\xi)-\V(\eta)|^2&\approx_{i_G, s_G} G_{|\xi|}(|\xi-\eta|)  \approx_{i_G, s_G} \G(\xi,\eta)\approx_{i_G, s_G} \tfrac{g(|\xi| + |\eta|)}{|\xi| + |\eta|}|\xi - \eta|^2\\
&\approx_{i_G, s_G} \left\langle \tfrac{G(|\xi|)}{|\xi|^2}\xi-\tfrac{G(|\eta|)}{|\eta|^2}\eta,\xi-\eta\right\rangle=\mathcal{J}_G(\xi,\eta).
\end{split}
\end{equation}
\end{theorem}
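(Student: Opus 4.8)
The plan is to establish the chain of equivalences in \eqref{equiv-theo:main} by selecting one central quantity—say $\frac{g(|\xi|+|\eta|)}{|\xi|+|\eta|}|\xi-\eta|^2$—and proving that every other expression is $\approx_{i_G,s_G}$ to it, so that transitivity of $\approx$ closes the loop. Throughout I would reduce everything to the two-sided power bounds coming from $G\in\Delta_2\cap\nabla_2$: by Lemma~\ref{lem:iG-sG} the function $t\mapsto G(t)/t^{i_G}$ is nondecreasing and $t\mapsto G(t)/t^{s_G}$ is nonincreasing (with the analogous statement for $g(t)t/G(t)\in[i_G,s_G]$), which is the workhorse that converts all the ``power-like'' manipulations from the $p$-Laplacian case into statements with constants depending only on $i_G,s_G$. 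By symmetry in $\xi,\eta$ and homogeneity considerations I would, whenever convenient, normalize to $|\eta|\le|\xi|$ and split into the two regimes $|\xi-\eta|\le\tfrac12|\xi|$ (the ``nondegenerate'' regime, where $|\eta|\approx|\xi|\approx|\xi|+|\eta|$) and $|\xi-\eta|\ge\tfrac12|\xi|$ (the ``degenerate'' regime, where $|\xi-\eta|\approx|\xi|+|\eta|$ as well). The definition of $\G(\xi,\eta)$ in \eqref{cG} is essentially designed around exactly this dichotomy, so in the first regime $\G(\xi,\eta)=\tfrac{G(|\xi|)}{|\xi|^2}|\xi-\eta|^2$ while in the second one it matches $G(|\xi-\eta|)$ up to doubling.

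The individual equivalences I would carry out as follows. \emph{Step 1:} $G_{|\xi|}(|\xi-\eta|)\approx \tfrac{g(|\xi|+|\eta|)}{|\xi|+|\eta|}|\xi-\eta|^2$. Here I would estimate the integral in \eqref{G-shift} by noting that on $s\in(0,|\xi-\eta|)$ the integrand $\tfrac{g(|\xi|+s)}{|\xi|+s}s$ is, up to $i_G,s_G$-constants, comparable to its value at $s=|\xi-\eta|$ after integrating (this is the standard ``almost-increasing integrand'' computation), then compare $|\xi|+|\xi-\eta|$ with $|\xi|+|\eta|$ using the triangle inequality and doubling of $g(t)/t$. \emph{Step 2:} $\G(\xi,\eta)\approx \tfrac{g(|\xi|+|\eta|)}{|\xi|+|\eta|}|\xi-\eta|^2$, handled regime by regime: in the nondegenerate regime use $G(|\xi|)/|\xi|^2\approx g(|\xi|)/|\xi|\approx g(|\xi|+|\eta|)/(|\xi|+|\eta|)$; in the degenerate regime use $G(|\xi-\eta|)\approx g(|\xi-\eta|)|\xi-\eta|$ and $|\xi-\eta|\approx|\xi|+|\eta|$. \emph{Step 3:} $|\V(\xi)-\V(\eta)|^2\approx \tfrac{g(|\xi|+|\eta|)}{|\xi|+|\eta|}|\xi-\eta|^2$. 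Writing $\V(\xi)=\sqrt{G(|\xi|)}\,\xi/|\xi|$, I would split $\V(\xi)-\V(\eta)$ into a radial part $\bigl(\tfrac{\sqrt{G(|\xi|)}}{|\xi|}-\tfrac{\sqrt{G(|\eta|)}}{|\eta|}\bigr)\xi$ and an angular part $\tfrac{\sqrt{G(|\eta|)}}{|\eta|}(\xi-\eta)$; the angular part contributes $\approx \tfrac{G(|\eta|)}{|\eta|^2}|\xi-\eta|^2$ and the radial part is controlled by the mean value theorem applied to $t\mapsto\sqrt{G(t)}/t$ on $[|\eta|,|\xi|]$ together with $|\,|\xi|-|\eta|\,|\le|\xi-\eta|$. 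The lower bound is the more delicate direction and I would follow the $p$-Laplacian template (as in Lemma~\ref{lem:p-male}/Lemma~\ref{lem:p>2}), integrating $\tfrac{d}{dt}\V(\eta+t(\xi-\eta))$ and bounding $|D\V(\zeta)|^2\approx g(|\zeta|)/|\zeta|$. \emph{Step 4:} $\mathcal{J}_G(\xi,\eta)\approx \tfrac{g(|\xi|+|\eta|)}{|\xi|+|\eta|}|\xi-\eta|^2$. Writing $\mathcal{J}_G=\int_0^1\bigl\langle D\opA_0(\zeta_t)(\xi-\eta),\xi-\eta\bigr\rangle\,dt$ with $\opA_0(\zeta)=\tfrac{G(|\zeta|)}{|\zeta|^2}\zeta$ and $\zeta_t=\eta+t(\xi-\eta)$, the eigenvalues of $D\opA_0(\zeta)$ are comparable to $g(|\zeta|)/|\zeta|$ and $G(|\zeta|)/|\zeta|^2$, both $\approx g(|\zeta|)/|\zeta|$; then reduce $\int_0^1 g(|\zeta_t|)/|\zeta_t|\,dt$ to its value near the endpoint maximizing $|\zeta_t|$, which is $\approx g(|\xi|+|\eta|)/(|\xi|+|\eta|)$, with the lower bound again requiring the regime split so that $|\zeta_t|$ stays bounded below on a set of $t$ of definite measure.

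The main obstacle I anticipate is the lower bounds for $|\V(\xi)-\V(\eta)|^2$ and for $\mathcal{J}_G(\xi,\eta)$ in the degenerate regime $|\xi-\eta|\gtrsim|\xi|+|\eta|$, where one of $\xi,\eta$ may be comparatively tiny and the naive integral-along-the-segment argument loses a factor because $|\zeta_t|$ can be small on part of $[0,1]$; this is precisely the place where the $p<2$ versus $p>2$ split appears in the classical proofs and where the $\nabla_2$ versus $\Delta_2$ hypotheses must be used asymmetrically (one to push $g(t)/t$ up, one to push it down). Keeping every constant expressed purely through $i_G$ and $s_G$—rather than through $\|G'\|$ or second derivatives of $G$, which we do not assume to exist—requires that each such estimate be phrased via the monotonicity of $G(t)/t^{i_G}$ and $G(t)/t^{s_G}$ from Lemma~\ref{lem:iG-sG} and never via differentiation of $g$. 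Once the degenerate-regime lower bounds are secured, the remaining implications are routine consequences of the triangle inequality, doubling, and the elementary inequalities for $V_p$-type maps already recorded earlier in the paper.
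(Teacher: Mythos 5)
Your overall architecture -- reducing every term in \eqref{equiv-theo:main} to $\tfrac{g(|\xi|+|\eta|)}{|\xi|+|\eta|}|\xi-\eta|^2$ and closing by transitivity -- is exactly the paper's, and your Steps~1 and~2 match what the paper does inside the proof of Theorem~\ref{theo:main}: $G_a$ is compared with its derivative $g_a$ via Lemma~\ref{lem:iG-sG}, the shifted argument $|\xi|+|\xi-\eta|$ is replaced by $|\xi|+|\eta|$ using triangle inequality and doubling, and $\G$ is matched to $G_a$ as in \eqref{Ga}.

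Steps~3 and~4, however, take a genuinely different route, and Step~3 as written has a gap. The paper establishes the $\V$- and $\mathcal{J}_G$-equivalences beforehand in Lemmas~\ref{lem:VG} and~\ref{lem:VG2}, and both hinge on a single algebraic identity, $\V(\xi)=V_1\bigl(\tfrac{G(|\xi|)}{|\xi|}\xi\bigr)$ (see \eqref{VG=V1}), which lets one feed the vectors $\tfrac{G(|\xi|)}{|\xi|}\xi$ and $\tfrac{G(|\eta|)}{|\eta|}\eta$ into the already-proved $p$-Laplacian results Lemma~\ref{lem:Vp2} and Lemma~\ref{lem:Vp} at $p=1$; this avoids differentiating $\V$ or the vector field $\zeta\mapsto\tfrac{G(|\zeta|)}{|\zeta|^2}\zeta$ altogether. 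Your segment-integration argument with eigenvalue bounds for $\mathcal{J}_G$ (Step~4) can be made to work -- one needs to note that $i_G>1$ makes the integrand integrable along a segment through the origin and that a sub-interval of $[0,1]$ of definite length keeps $|\zeta_t|\gtrsim|\xi|+|\eta|$ -- so the divergence there is stylistic. But in Step~3 you claim the lower bound on $|\V(\xi)-\V(\eta)|^2$ by \emph{integrating} $\tfrac{d}{dt}\V(\eta+t(\xi-\eta))$ and using $|D\V(\zeta)|^2\approx g(|\zeta|)/|\zeta|$: integrating a pointwise bound on $|D\V|$ along the segment yields only an \emph{upper} bound for $|\V(\xi)-\V(\eta)|$, and the Lemmas~\ref{lem:p-male} and~\ref{lem:p>2} you cite as a template do not obtain their lower bounds this way (one parameterizes by the ratio $|\xi|/|\eta|$, the other differentiates in $p$ and uses Jensen). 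In the degenerate regime where the segment passes near the origin -- the very case you flag as delicate -- the lower bound does not follow from your stated argument; you would need either the paper's $V_1$-reduction, or the explicit parameterization over $\lambda=\tfrac{\langle\xi,\eta\rangle}{|\xi||\eta|}$ from the proof of Lemma~\ref{lem:Vp2} for $p<2$, or the split on $|\xi|\leq 2|\eta|$ versus $|\xi|>2|\eta|$ that the paper uses directly inside Lemma~\ref{lem:VG}.
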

Putting $G(t) = t^p$ in Theorem~\ref{theo:main} immediately gives us the following corollary.
\begin{corollary}\label{coro:main} 
Suppose  $1<p<\infty$, $V_p$ is given by~\eqref{Vp}, and $\mathcal{J}_p$ by~\eqref{mono-p}. Then 
\begin{equation*}
|V_p(\xi)-V_p(\eta)|^2  \approx_p \big(|\xi|^2 + |\eta|^2\big)^{\frac{p-2}{2}}|\xi-\eta|^2\approx_{p} \left(|\xi| + |\eta|\right)^{p-2}|\xi - \eta|^2 \approx_{p} \mathcal{J}_p(\xi,\eta).
\end{equation*}
\end{corollary}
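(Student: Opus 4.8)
The plan is to deduce Corollary~\ref{coro:main} directly from Theorem~\ref{theo:main} by specializing to $G(t)=t^p$, and then to simplify the resulting three auxiliary quantities to the explicit power-type expressions in the statement. First I would check that $G(t)=t^p$ with $1<p<\infty$ is indeed an $N$-function in $C(\rpp)\cap C^1((0,\infty))$ satisfying $G\in\Delta_2\cap\nabla_2$, with $g(t)=pt^{p-1}$, and that its indices satisfy $i_G=s_G=p$ (this is immediate from Lemma~\ref{lem:iG-sG}, or directly from $tg(t)/G(t)\equiv p$). Consequently every $\approx_{i_G,s_G}$ in~\eqref{equiv-theo:main} becomes $\approx_p$, and Theorem~\ref{theo:main} applies to all $\xi,\eta\in\rn$ with $\xi\neq 0$; by symmetry of all five quantities in $\xi\leftrightarrow\eta$ (swapping exchanges $\V(\xi)-\V(\eta)$ up to sign, fixes $(|\xi|+|\eta|)^{p-2}|\xi-\eta|^2$, and fixes $\mathcal{J}_p$), the case $\eta\neq 0$, $\xi=0$ is covered too, and the case $\xi=\eta=0$ is trivial (all terms vanish).

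Next I would identify each term of~\eqref{equiv-theo:main} under $G(t)=t^p$. The left-hand side becomes $|V_p(\xi)-V_p(\eta)|^2$ since $\V(\xi)=|\xi|^{(p-2)/2}\xi=V_p(\xi)$ by~\eqref{V} and~\eqref{Vp}. The fourth term $\tfrac{g(|\xi|+|\eta|)}{|\xi|+|\eta|}|\xi-\eta|^2$ becomes exactly $p(|\xi|+|\eta|)^{p-2}|\xi-\eta|^2\approx_p(|\xi|+|\eta|)^{p-2}|\xi-\eta|^2$, which is the third expression in the corollary. The last term is by definition $\mathcal{J}_G(\xi,\eta)=\langle|\xi|^{p-2}\xi-|\eta|^{p-2}\eta,\xi-\eta\rangle=\mathcal{J}_p(\xi,\eta)$, the fourth expression in the corollary. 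So from Theorem~\ref{theo:main} we already get
\[
|V_p(\xi)-V_p(\eta)|^2\approx_p (|\xi|+|\eta|)^{p-2}|\xi-\eta|^2\approx_p \mathcal{J}_p(\xi,\eta),
\]
and it remains only to insert $(|\xi|^2+|\eta|^2)^{(p-2)/2}|\xi-\eta|^2$ into this chain.

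The one genuinely independent step — and the mildest possible obstacle — is the elementary two-sided bound
\[
(|\xi|^2+|\eta|^2)^{\frac{p-2}{2}}\approx_p (|\xi|+|\eta|)^{p-2},
\]
valid whenever $(\xi,\eta)\neq(0,0)$. This follows from the scalar inequality $\max\{s,t\}\le (s^2+t^2)^{1/2}\le s+t\le \sqrt 2\,(s^2+t^2)^{1/2}$ for $s,t\ge 0$: raising to the power $p-2$ (which reverses the inequalities when $p<2$) gives, in either case, $2^{-|p-2|/2}(|\xi|+|\eta|)^{p-2}\le (|\xi|^2+|\eta|^2)^{(p-2)/2}\le 2^{|p-2|/2}(|\xi|+|\eta|)^{p-2}$. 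Multiplying through by $|\xi-\eta|^2$ and splicing into the displayed chain yields all the equivalences in the corollary, with constants depending only on $p$. I would also remark that one could instead quote the inner estimate $G_{|\xi|}(|\xi-\eta|)\approx_p(|\xi|+|\eta|)^{p-2}|\xi-\eta|^2$ directly from~\eqref{G-shift} by computing $G_a(t)=\int_0^t p(a+s)^{p-2}s\,ds$, but invoking Theorem~\ref{theo:main} as a black box together with the scalar lemma above is the shortest route and keeps the corollary a true corollary.
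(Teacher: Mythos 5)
Your proposal is correct and follows essentially the same route as the paper, which likewise obtains the corollary by putting $G(t)=t^p$ (so that $i_G=s_G=p$) into Theorem~\ref{theo:main}, the extra middle quantity being handled exactly as in the paper's equivalence~\eqref{eq:wrzesien0}, i.e.\ $\big(|\xi|^2+|\eta|^2\big)^{\frac{p-2}{2}}\approx_p(|\xi|+|\eta|)^{p-2}$. The only nuance worth noting is that the paper additionally remarks that the corollary ``actually results from'' the elementary power-case Lemmas~\ref{lem:pow-approx} and~\ref{lem:Vp} (with Lemma~\ref{lem:Vp2}), since those feed into the proof of Theorem~\ref{theo:main}; your black-box use of the theorem is nonetheless logically sound and non-circular.
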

Let us point out that actually Corollary~\ref{coro:main} results from Lemmas~\ref{lem:pow-approx} and~\ref{lem:Vp}, which are proven in Section~\ref{sec:p} by elementary methods adequate for the power growth operators. Later on, in Section~\ref{sec:G}, we use these lemmas to prove Theorem~\ref{theo:main}.  

\medskip

If a function $G$ is superquadratic, the lower bound in Theorem~\ref{theo:main} gets simplified.
\begin{corollary}
If $G\in C(\rpp)\cap C^1((0,\infty))$ is an $N$-function such that $G\in\Delta_2\cap\nabla_2$ which is superquadratic (i.e. $i_G\geq 2$) and $g=G'$,  then \begin{equation}
    \label{eq:coro}
G(|\xi-\eta|)\lesssim_{i_G,s_G}
\left\langle \tfrac{G(|\xi|)}{|\xi|^2}\xi-\tfrac{G(|\eta|)}{|\eta|^2}\eta,\xi-\eta\right\rangle\qquad\text{ for all }\  \xi,\eta\in\rn.
\end{equation}
Indeed, for superquadratic $G$ function $t\mapsto g(t)/t$ is nondecreasing, hence we have 
\begin{flalign*}
G(|\xi-\eta|)&\lesssim_{i_G} \tfrac{g(|\xi-\eta|)}{|\xi-\eta|}|\xi-\eta|^2 \lesssim_{i_G,s_G}\tfrac{g(|\xi|+|\eta|)}{|\xi|+|\eta|}|\xi-\eta|^2\approx_{i_G, s_G} \mathcal{J}_G(\xi,\eta).
\end{flalign*} 
Note that in the view of \eqref{inq-2-p-male} inequality~\eqref{eq:coro} cannot hold for subquadratic $G$.
\end{corollary}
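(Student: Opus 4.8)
The plan is to read off \eqref{eq:coro} from Theorem~\ref{theo:main} together with a single consequence of superquadraticity, namely that the ratio $t\mapsto G(t)/t^{2}$ is nondecreasing on $(0,\infty)$. This is exactly $i_G\geq2$ rephrased: since $\frac{\D}{\D t}\log\!\big(G(t)/t^{2}\big)=\frac{tg(t)-2G(t)}{tG(t)}$, the ratio is nondecreasing precisely when $tg(t)\geq2G(t)$ for every $t>0$, and this holds as soon as $i_G\geq2$ by the lower-index inequality $tg(t)\geq i_G\,G(t)$. It is also convenient to keep at hand the elementary comparison $\frac{g(t)}{t}\approx_{i_G,s_G}\frac{G(t)}{t^{2}}$, which follows for every $N$-function in $\Delta_2\cap\nabla_2$ from the index inequalities $i_G\,G(t)\leq tg(t)\leq s_G\,G(t)$ (see Lemma~\ref{lem:iG-sG}); it allows one to pass freely between the ``$g$-form'' and the ``$G$-form'' of the quantities appearing in \eqref{equiv-theo:main}.

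The argument then consists of three short steps. If $\xi=\eta=0$ there is nothing to prove, and if exactly one of $\xi,\eta$ vanishes then \eqref{eq:coro} reduces to the identity $G(\snr{\xi})=\mathcal{J}_G(\xi,0)$ (or its symmetric analogue), so assume $\xi,\eta\neq0$. First, the triangle inequality gives $\snr{\xi-\eta}\leq\snr{\xi}+\snr{\eta}$, hence the monotonicity of $t\mapsto G(t)/t^{2}$ yields
\begin{equation*}
G(\snr{\xi-\eta})=\frac{G(\snr{\xi-\eta})}{\snr{\xi-\eta}^{2}}\,\snr{\xi-\eta}^{2}\leq\frac{G(\snr{\xi}+\snr{\eta})}{(\snr{\xi}+\snr{\eta})^{2}}\,\snr{\xi-\eta}^{2}.
\end{equation*}
Second, the comparison $g(t)/t\approx_{i_G,s_G}G(t)/t^{2}$ replaces the last term by $\frac{g(\snr{\xi}+\snr{\eta})}{\snr{\xi}+\snr{\eta}}\,\snr{\xi-\eta}^{2}$ up to a constant depending only on $i_G,s_G$. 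Third, Theorem~\ref{theo:main}---precisely the link $\frac{g(\snr{\xi}+\snr{\eta})}{\snr{\xi}+\snr{\eta}}\,\snr{\xi-\eta}^{2}\approx_{i_G,s_G}\mathcal{J}_G(\xi,\eta)$ contained in \eqref{equiv-theo:main}---closes the chain and gives \eqref{eq:coro}.

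I do not expect a genuine obstacle here: the entire content is the monotonicity of $t\mapsto G(t)/t^{2}$, and everything else is a rename through previously established equivalences. Two points deserve a word. First, the constant in \eqref{eq:coro} genuinely depends on $s_G$ as well as on $i_G$, since $s_G$ enters through the $g$-to-$G$ comparison and again through Theorem~\ref{theo:main}. Second, superquadraticity is necessary rather than cosmetic: for $i_G<2$ the ratio $t\mapsto G(t)/t^{2}$ need not be nondecreasing---for $G(t)=t^{p}$ with $1<p<2$ it is in fact strictly decreasing---so the first step fails, and indeed \eqref{eq:coro} fails as well. A concrete witness is $G(t)=t^{p}$, $1<p<2$: taking $\snr{\xi}=\snr{\eta}=R\geq\tfrac12$ with $\snr{\xi-\eta}=1$, the right-hand side of \eqref{eq:coro} equals $\mathcal{J}_p(\xi,\eta)\approx_{p}R^{p-2}$ by Corollary~\ref{coro:main} (see also \eqref{inq-2-p-male}), which tends to $0$ as $R\to\infty$, whereas the left-hand side stays equal to $1$.
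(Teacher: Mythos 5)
Your proof is correct and follows essentially the same route as the paper: exploit superquadraticity to bound $G(\snr{\xi-\eta})$ by $\tfrac{g(\snr{\xi}+\snr{\eta})}{\snr{\xi}+\snr{\eta}}\snr{\xi-\eta}^2$ using the triangle inequality, then conclude via Theorem~\ref{theo:main}. The only difference is that you phrase the monotonicity step via $t\mapsto G(t)/t^2$ nondecreasing (which follows cleanly from \eqref{comp-i_G-s_G} once $i_G\geq 2$) rather than the paper's ``$t\mapsto g(t)/t$ nondecreasing,'' and this is in fact the slightly more careful formulation, since under only $C^1$ regularity the latter is not literally a monotone statement but holds just up to $\approx_{i_G,s_G}$ via Lemma~\ref{lem:iG-sG}.
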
 

We point out that the claim of Theorem~\ref{theo:main} enables to control  a broader class of operators than just~\eqref{eq:A}. From the point of view of the growth and coercivity there is no reason for $\opA$ to have separated variables.
\begin{remark} 
Assume $G\in C(\rpp)\cap C^1((0,\infty))$ is an $N$-function such that $G\in\Delta_2\cap\nabla_2$. The natural family of second order differential operators generalizing $p$-Laplacian to operators of the form $-\dv\opA(x,Du)$, where  $\opA:\Omega\times\rn\to\rn$ is a Carath\'eodory function having the doubling Orlicz growth with respect to the second variable,  satisfies conditions
\begin{equation}
    \label{opA-cond}\begin{cases}
    |\opA(x,\xi)|\leq c_1^\opA g(|\xi|),\\\langle \opA(x,\xi)-\opA(x,\eta),\xi-\eta\rangle\geq c_2^\opA|\V(\xi)-\V(\eta)|^2\end{cases}
\end{equation}
with certain constants $c_1^\opA,c_2^\opA>0$ for all $ \xi,\eta\in\rn$. Let us note that the first line of~\eqref{opA-cond} implies that $\opA(x,\xi)=0$ whenever $\xi=0$. Moreover, the second line of~\eqref{opA-cond} for $\eta=0$  implies that  \[ c_2^\opA G(|\xi|)\leq \langle \opA(x,\xi),\xi\rangle.\]
 PDE problems under such regime are studied e.g. in~\cite{CYZG,CiMa}.
\end{remark}

{\bf Organization. } In Section~\ref{sec:prelim} we provide basic definitions and lemmas. In the subsequent two sections we provide results on the control over the monotonicity rates $\mathcal{J}_p$ from~\eqref{mono-p} and $\mathcal{J}_G$ from~\eqref{mono-G}. In fact, Section~\ref{sec:p} is devoted to  the precise analysis of the $p$-growth case including the discussion on the constants. Later on, in Section~\ref{sec:G}, we present related results for the Orlicz case summarized with the proof of Theorem~\ref{theo:main}.

\section{Preliminaries}\label{sec:prelim}

 

\begin{definition} [$N$-function and its conjugate]$ $\\
We say that a function $G:[0, \infty) \to [0, \infty)$ is an $N$-function if it is a convex, continuous,  and such that $G(0)=0$, $\lim_{t \to 0}{G(t)}/{t}=0$, and $\lim_{t \to \infty}{G(t)}/{t}=\infty$.\\
The Young conjugate $\wt{G}$  (called also the complementary function, or the Legendre transform) to an $N$-function $G:\rpp\to\rpp$  is given by the following formula
$\wt{G}(s):=\sup_{t>0}(s t-G(t)).$ 
\end{definition}

\begin{definition}[$\Delta_2$ and $\nabla_2$-condition]\label{def:D2} We say that a function $G:\rpp\to\rpp$ satisfies $\Delta_2$-condition (denoted $G\in\Delta_2$), if there exists $c_{\Delta_2}>0$ such that $G(2t)\leq c_{\Delta_2}G(t)$ for $t>0$.\\ We say that $G$ satisfy $\nabla_2$-condition if $\wt{G}\in\Delta_2.$  \end{definition}
\noindent 
The above conditions describe the speed and {the} regularity of the growth. Note that it is possible that $G$ satisfies only one of the conditions $\Delta_2/\nabla_2$. For instance, when $G(t) = (1+|t|)\log(1+|t|)-|t|$, its complementary function is  $\widetilde{G}(s)= \exp(|s|)-|s|-1$. Then $G\in\Delta_2$ and   $\widetilde{G}\not\in\Delta_2$.

\medskip

In the sequel we consider $G$ which is an $N$-function such that $G\in C(\rpp)\cap C^1((0,\infty))$ and we denote by $g$ its derivative ($g=G'$). Calling $\wt{G}$ complementary function is justified by the fact that
\begin{equation}
    \label{G-int-form}
G(t)=\int_0^t g(s)\,ds\qquad\text{and}\qquad \wt{G}(t)=\int_0^t g^{-1}(s)\,ds.
\end{equation}
For the proof see \cite[Section~8.3]{adamsfournier}. From this fact one can directly deduce the following fact.

\begin{lemma}\label{lem:1} If $G:\rpp\to\rpp$ is a $N$-function with a continuous derivative $g$, then
\[\tfrac{t}{2}g\left(\tfrac{t}{2}\right)\leq G(t)\leq tg(t).\]
\end{lemma}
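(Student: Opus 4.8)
The plan is to prove the two inequalities separately, both of them flowing directly from the integral representation $G(t) = \int_0^t g(s)\,ds$ recorded in \eqref{G-int-form} together with the monotonicity of $g$ that comes from convexity of $G$.

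\textbf{Lower bound $\tfrac{t}{2}g\!\left(\tfrac{t}{2}\right) \leq G(t)$.} Since $G$ is convex and $C^1$ on $(0,\infty)$, the derivative $g$ is nondecreasing. First I would write $G(t) = \int_0^t g(s)\,ds \geq \int_{t/2}^t g(s)\,ds$, using $g\geq 0$ on the discarded piece $[0,t/2]$. Then on the interval $[t/2,t]$ we have $g(s)\geq g(t/2)$ by monotonicity, so $\int_{t/2}^t g(s)\,ds \geq (t - t/2)\,g(t/2) = \tfrac{t}{2}g\!\left(\tfrac{t}{2}\right)$, which is exactly the claimed lower bound.

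\textbf{Upper bound $G(t)\leq t\,g(t)$.} Again from $G(t) = \int_0^t g(s)\,ds$ and $g(s)\leq g(t)$ for all $s\in[0,t]$ by monotonicity of $g$, we get $G(t) \leq \int_0^t g(t)\,ds = t\,g(t)$. This completes the proof.

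There is essentially no obstacle here: the only structural facts needed are that an $N$-function is convex (hence has a nondecreasing derivative) and that \eqref{G-int-form} holds, which the excerpt has already cited. One mild point worth a sentence in the writeup is that convexity plus $C^1((0,\infty))$ genuinely gives $g$ nondecreasing on $(0,\infty)$, and the value $g(0)$ (or the limit $\lim_{s\to 0^+} g(s)$) is nonnegative since $G$ is nondecreasing near $0$ with $G(0)=0$; this justifies dropping $\int_0^{t/2} g$. Everything else is a one-line estimate in each direction.
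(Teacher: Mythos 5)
Your proof is correct and follows essentially the same route as the paper: both use the integral representation $G(t)=\int_0^t g(s)\,ds$ together with the monotonicity of $g$ (from convexity), discarding $\int_0^{t/2}g$ for the lower bound and bounding $g(s)\leq g(t)$ for the upper bound. The small remark about nonnegativity of $g$ near $0$ is a fine addition but not a substantive difference.
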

\begin{proof}Due to the convexity of $G$, function $g$ is nondecreasing. In turn, the first inequality follows from the fact that \[G(t)=\int_0^tg(s)\,ds\geq\int_{t/2}^t g(s)\,ds\geq \tfrac{t}{2}g\left(\tfrac{t}{2}\right).\] The second inequality is also a consequence of monotonicity of $g$.
\end{proof}

Having the above fact, we infer the comparison of a doubling function with power-type functions.

\begin{lemma}\label{lem:iG-sG} Suppose $G\in C(\rpp)\cap C^1((0,\infty))$. Then $G\in\Delta_2\cap\nabla_2$ if and only if~\begin{equation}\label{iG-sG} 
1<  i_G=\inf_{t>0}\frac{tg(t)}{G(t)}\leq \sup_{t>0}\frac{tg(t)}{G(t)}=s_G<\infty.\end{equation}
Moreover, then
\begin{equation}\label{comp-i_G-s_G} 
\frac{G(t)}{t^{i_G}}\quad\text{is non-decreasing}\qquad\text{and}\qquad\frac{G(t)}{t^{s_G}}\quad\text{is non-increasing},
\end{equation} 
and \[i_GG(t)\leq g(t)t\leq s_GG(t).\]
\end{lemma}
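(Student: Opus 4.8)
The plan is to prove Lemma~\ref{lem:iG-sG} by establishing the chain of equivalences and implications: $G\in\Delta_2\cap\nabla_2$ $\Longleftrightarrow$ the index bounds \eqref{iG-sG}, and then deriving the monotonicity properties \eqref{comp-i_G-s_G} together with the pointwise estimate $i_G G(t)\le g(t)t\le s_G G(t)$ (which is really just a restatement of the definitions of $i_G$ and $s_G$ once they are known to be finite and $>1$).

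First I would handle the easy direction of the characterization of the monotonicity statements. Set $\phi_\alpha(t):=G(t)/t^\alpha$ for $t>0$. A direct computation gives $\phi_\alpha'(t)=t^{-\alpha-1}\bigl(tg(t)-\alpha G(t)\bigr)$, so $\phi_\alpha$ is non-decreasing precisely when $tg(t)\ge\alpha G(t)$ for all $t$, i.e. when $\alpha\le i_G$, and non-increasing precisely when $\alpha\ge s_G$. This simultaneously yields \eqref{comp-i_G-s_G} (choosing $\alpha=i_G$ and $\alpha=s_G$) and shows the pointwise bound $i_GG(t)\le g(t)t\le s_GG(t)$ is immediate from the very definitions $i_G=\inf_t tg(t)/G(t)$, $s_G=\sup_t tg(t)/G(t)$. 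Note Lemma~\ref{lem:1} already guarantees $tg(t)/G(t)\ge1$, so $i_G\ge1$ for free; the content is the strict inequality $i_G>1$ and finiteness of $s_G$, which is exactly where the $\nabla_2$ and $\Delta_2$ conditions respectively enter.

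Next, the equivalence itself. For the implication \eqref{iG-sG}$\Rightarrow G\in\Delta_2\cap\nabla_2$: from $\phi_{s_G}$ non-increasing we get $G(2t)/(2t)^{s_G}\le G(t)/t^{s_G}$, hence $G(2t)\le 2^{s_G}G(t)$, so $G\in\Delta_2$ with $c_{\Delta_2}=2^{s_G}$. For $\nabla_2$, one shows $\wt G\in\Delta_2$; the cleanest route is to use the known duality $s_{\wt G} = i_G/(i_G-1)$ and $i_{\wt G}=s_G/(s_G-1)$ (which can be derived from \eqref{G-int-form} by a change of variables $s=g(\tau)$, turning $\int_0^t g^{-1}$ into an integral controlled by the same indices), and since $i_G>1$ forces $s_{\wt G}<\infty$, the $\Delta_2$-condition for $\wt G$ follows from the argument just given applied to $\wt G$. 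Conversely, for $G\in\Delta_2\Rightarrow s_G<\infty$: iterating $G(2t)\le c_{\Delta_2}G(t)$ gives polynomial-type upper growth $G(\lambda t)\le C\lambda^q G(t)$ for some $q$, and combined with convexity (which gives $tg(t)\le \int_t^{2t} g\le G(2t)\le c_{\Delta_2}G(t)$ via $g$ non-decreasing) one extracts a finite uniform bound on $tg(t)/G(t)$. Symmetrically, $G\in\nabla_2$ (i.e. $\wt G\in\Delta_2$) translates through the duality into $i_G>1$.

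The main obstacle I anticipate is the $\nabla_2\Leftrightarrow i_G>1$ half of the equivalence, since it requires passing to the conjugate function $\wt G$ and relating its $\Delta_2$-behavior to a \emph{lower} index bound on $G$ itself; doing this cleanly with only $G\in C^1$ and without assuming second differentiability needs the integral representation \eqref{G-int-form} and a careful substitution $s\mapsto g(s)$ rather than a slick derivative argument. Everything else — the $\phi_\alpha$ monotonicity computation, the $\Delta_2\Leftrightarrow s_G<\infty$ direction, and the pointwise bound — is routine once Lemma~\ref{lem:1} and convexity of $G$ (giving $g$ non-decreasing) are in hand.
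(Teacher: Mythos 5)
The paper itself offers no proof of Lemma~\ref{lem:iG-sG}: it is stated as a known fact (with \cite{KR,rao-ren} as background references), so there is no in-paper argument to compare against; your proposal has to be judged on its own. Your route is the standard one and is essentially sound: the computation $\frac{d}{dt}\bigl(G(t)t^{-\alpha}\bigr)=t^{-\alpha-1}\bigl(tg(t)-\alpha G(t)\bigr)$ does give \eqref{comp-i_G-s_G} and the pointwise bound $i_GG(t)\le tg(t)\le s_GG(t)$ at once; $s_G<\infty\Rightarrow G\in\Delta_2$ with $c_{\Delta_2}=2^{s_G}$ is correct; and the converse $\Delta_2\Rightarrow s_G\le c_{\Delta_2}$ via $tg(t)\le\int_t^{2t}g\le G(2t)$ is exactly right.

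Two points in the $\nabla_2$ half deserve more care than your sketch gives them. First, you invoke the exact index duality $s_{\wt{G}}=i_G/(i_G-1)$, $i_{\wt{G}}=s_G/(s_G-1)$; equalities are not needed and are harder to justify in full generality (e.g.\ when $g$ has plateaus, so $g^{-1}$ must be a generalized inverse). What you actually need are the two inequalities, and these follow cleanly from Young's equality $G(t)+\wt{G}(g(t))=tg(t)$: from $\wt{g}(g(t))\ge t$ one gets $\wt{G}(g(t))\ge tg(t)/s_{\wt{G}}$, hence $i_G\ge s_{\wt{G}}/(s_{\wt{G}}-1)>1$, and symmetrically $i_G>1$ yields $s_{\wt{G}}\le i_G/(i_G-1)<\infty$. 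Second, $\wt{G}$ need not be $C^1$ even when $G$ is, so the derivative test $\phi_\alpha$ cannot be applied verbatim to $\wt{G}$; either use the a.e.\ derivative of the convex function $\wt{G}$ together with $\log$-integration, or avoid $\wt{g}$ altogether in the direction $i_G>1\Rightarrow\wt{G}\in\Delta_2$ by the substitution $t=\lambda u$ in $\wt{G}(2s)=\sup_t\,(2st-G(t))$ with $\lambda=2^{1/(i_G-1)}$ and \eqref{comp-i_G-s_G}, which gives $\wt{G}(2s)\le 2^{i_G/(i_G-1)}\wt{G}(s)$ directly. With these repairs your plan yields a complete and correct proof; as written, the duality step is the one place where the argument is still only an outline.
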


\begin{remark}  Note that $G\not\in\Delta_2$ can be trapped between two power-type functions with arbitrarily close powers, see~\cite{CGZG,BDMS,KR} for various constructions. 
\end{remark}
We have the following consequences of Lemmas~\ref{lem:1} and~\ref{lem:iG-sG}.
\begin{corollary}\label{cor:g-delta2} If  $G\in\Delta_2\cap\nabla_2$, then  the constants in conditions $\Delta_2$ and $\nabla_2$ depend only on $i_G$ and $s_G$. Moreover, $g(t) \approx_{i_G, s_G} \tfrac{G(t)}{t} \approx_{i_G, s_G} \tfrac{G(2t)}{2t} \approx_{i_G, s_G} g(2t). $ 
\end{corollary}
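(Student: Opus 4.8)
\textbf{Proof proposal for Corollary~\ref{cor:g-delta2}.}

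The plan is to obtain everything from Lemmas~\ref{lem:1} and~\ref{lem:iG-sG} by purely elementary manipulations, first dealing with the claim on the $\Delta_2$ and $\nabla_2$ constants and then with the chain of equivalences for $g$.

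First I would address the dependence of $c_{\Delta_2}$ on the indices. Using the monotonicity statement~\eqref{comp-i_G-s_G}, namely that $G(t)/t^{s_G}$ is non-increasing, evaluated at $t$ and $2t$, one immediately gets $G(2t)/(2t)^{s_G}\leq G(t)/t^{s_G}$, i.e. $G(2t)\leq 2^{s_G}G(t)$; so one may take $c_{\Delta_2}=2^{s_G}$, which depends only on $s_G$. For the $\nabla_2$-condition one argues dually: since $G\in\Delta_2\cap\nabla_2$, Lemma~\ref{lem:iG-sG} gives $1<i_G\leq s_G<\infty$, and the same lemma applied to $\wt G$ (whose indices are the conjugate exponents $s_G/(s_G-1)$ and $i_G/(i_G-1)$, a standard fact for Young conjugates — alternatively one can quote it or prove it in one line from the definition of $\wt G$ together with~\eqref{G-int-form}) shows $\wt G(2s)\leq 2^{i_G/(i_G-1)}\wt G(s)$, so the $\nabla_2$-constant depends only on $i_G$. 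This settles the first assertion.

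Next I would prove the four-term equivalence $g(t)\approx \tfrac{G(t)}{t}\approx\tfrac{G(2t)}{2t}\approx g(2t)$. The comparison $g(t)\approx_{i_G,s_G} G(t)/t$ is immediate: Lemma~\ref{lem:iG-sG} gives $i_G G(t)\leq tg(t)\leq s_G G(t)$, hence $i_G\,\tfrac{G(t)}{t}\leq g(t)\leq s_G\,\tfrac{G(t)}{t}$. Replacing $t$ by $2t$ gives the same equivalence between $g(2t)$ and $G(2t)/(2t)$ with the same constants. It then remains to compare $G(t)/t$ with $G(2t)/(2t)$: by monotonicity of $g$ and Lemma~\ref{lem:1}, $G(t)\le G(2t)$ gives $\tfrac{G(t)}{t}\le 2\,\tfrac{G(2t)}{2t}$, while the $\Delta_2$-bound $G(2t)\leq 2^{s_G}G(t)$ just established gives $\tfrac{G(2t)}{2t}\leq 2^{s_G-1}\tfrac{G(t)}{t}$. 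Chaining these four comparisons yields the displayed equivalences, all constants depending only on $i_G$ and $s_G$.

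I do not anticipate a genuine obstacle here; the only mildly delicate point is the claim that $\wt G$ inherits a $\Delta_2$-condition with a constant controlled by $i_G$ (equivalently, that the indices of $\wt G$ are the Hölder conjugates of those of $G$). If one prefers to avoid invoking that standard fact, it can be replaced by a direct argument: from $G\in\nabla_2$ one has $G(t)\geq \tfrac{1}{2\ell}G(\ell t)$ for some $\ell>1$ — more conveniently, the second monotonicity in~\eqref{comp-i_G-s_G} (that $G(t)/t^{i_G}$ is non-decreasing, with $i_G>1$) gives superlinear growth $G(\lambda t)\geq \lambda^{i_G}G(t)$ for $\lambda\geq 1$, which is exactly the quantitative $\nabla_2$-type information needed, and translating this through the definition $\wt G(s)=\sup_{t>0}(st-G(t))$ produces $\wt G(2s)\leq C(i_G)\wt G(s)$. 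Either way the computation is short and the constants are explicit.
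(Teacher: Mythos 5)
Your proposal is correct and follows exactly the route the paper intends: the paper gives no explicit proof, stating only that the corollary is a consequence of Lemmas~\ref{lem:1} and~\ref{lem:iG-sG}, and your argument fills in precisely those details (the $\Delta_2$-constant $2^{s_G}$ from the monotonicity of $G(t)/t^{s_G}$, the $\nabla_2$-constant controlled by $i_G$ via the conjugate indices or the bound $G(\lambda t)\geq\lambda^{i_G}G(t)$, and the chain $g(t)\approx G(t)/t\approx G(2t)/(2t)\approx g(2t)$ from $i_GG(t)\leq tg(t)\leq s_GG(t)$). No gaps; the constants you obtain depend only on $i_G$ and $s_G$, as required.
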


\section{Operators of power growth} \label{sec:p}

Below we present detailed proofs of basic properties of the quantity~\eqref{mono-p}.   Lemmas~\ref{lem:p-male} and~\ref{lem:p>2} provide basic inequalities, for $1 < p \leq 2$ and $p > 2$ respectively. They are used to prove more general results presented by Lemmas~\ref{lem:Vp2},~\ref{lem:pow-approx} and~\ref{lem:Vp}, which are components of Corollary~\ref{coro:main}. Note that, in the view of counterexample of Remark~\ref{rem:ce}, the claim of Lemma~\ref{lem:p>2} differs from~\cite[Section 10, (I)]{Lindqvist}. 

\begin{lemma}[$1<p\leq 2$]\label{lem:p-male} Suppose $1<p\leq 2$.\\ Then there exists $c_1^p$ such that for all $\xi,\eta\in\rn$ such that $\xi\neq 0$ we have \begin{equation}
    \label{inq-1-p-male}\left\langle |\xi|^{p-2}\xi-|\eta|^{p-2}\eta,\xi-\eta\right\rangle\geq c_1^p\frac{|\xi-\eta|^2}{|\xi|^{2-p}+|\eta|^{2-p}}
\end{equation}
where the optimal constant is achieved when $\langle\xi,\eta\rangle =|\xi|\,|\eta|$ and is given by $c_1^p = \min\{1, 2(p - 1)\}$,
if $1 < p < 2$, and $c_1^2 = 2$.  Furthermore there exists $c_2^p>0$ such that for all $\xi,\eta\in\rn$ it holds that
\begin{equation}
\label{inq-2-p-male}\left\langle |\xi|^{p-2}\xi-|\eta|^{p-2}\eta,\xi-\eta\right\rangle \leq c_2^p |\xi-\eta|^p.
\end{equation}
\end{lemma}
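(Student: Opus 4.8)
\textbf{Proof proposal for Lemma~\ref{lem:p-male}.}

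The plan is to reduce the vectorial inequality to a one-parameter scalar inequality by exploiting the symmetry of the expression. First I would normalize: since $\mathcal{J}_p$ is homogeneous of degree $p$ in $(\xi,\eta)$ jointly, and since the right-hand sides scale the same way, we may assume WLOG $|\xi|\geq|\eta|$ and rescale so that $|\xi|=1$; write $|\eta|=t\in[0,1]$ and let $\theta$ be the angle between $\xi$ and $\eta$, so that $\langle\xi,\eta\rangle=t\cos\theta$. Expanding the scalar product, $\mathcal{J}_p(\xi,\eta)=1+t^p-(1+t^{p-2})t\cos\theta$, while $|\xi-\eta|^2=1+t^2-2t\cos\theta$. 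For the lower bound \eqref{inq-1-p-male} the task is then to minimize the ratio $\mathcal{J}_p/\bigl(|\xi-\eta|^2(1+t^{2-p})^{-1}\bigr)$ over $t\in[0,1]$ and $\cos\theta\in[-1,1]$.

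The key observation for the lower bound is a monotonicity-in-$\cos\theta$ argument: writing $c=\cos\theta$, both numerator and denominator are affine in $c$, and one checks that the numerator $\mathcal{J}_p$ decreases in $c$ (since $1+t^{p-2}\geq 0$) while $|\xi-\eta|^2$ also decreases in $c$; a short computation with the derivative of the quotient of two affine functions shows the worst case (smallest ratio) is attained at an endpoint, and comparing the two endpoints $c=\pm1$ one finds the minimum occurs at $c=1$, i.e.\ $\xi,\eta$ parallel, $\langle\xi,\eta\rangle=|\xi||\eta|$ — which is exactly the optimality claim in the statement. At $c=1$ the inequality collapses to a genuinely one-dimensional statement: for $0\le t\le 1$,
\[
(1-t)(1-t^{p-1})(1+t^{2-p})\geq c_1^p(1-t)^2,
\]
equivalently (dividing by $1-t>0$ for $t<1$) $(1-t^{p-1})(1+t^{2-p})\geq c_1^p(1-t)$. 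One then identifies the optimal $c_1^p$ as $\inf_{t\in[0,1)}\frac{(1-t^{p-1})(1+t^{2-p})}{1-t}$; expanding near $t=1$ via L'Hôpital or a Taylor expansion gives the limiting value $2(p-1)$, and checking $t\to 0^+$ gives the value $1$, whence $c_1^p=\min\{1,2(p-1)\}$ for $1<p<2$, with the $p=2$ case $c_1^2=2$ being a direct computation ($\mathcal{J}_2=|\xi-\eta|^2$ and $|\xi|^0+|\eta|^0=2$). I would verify that the infimum over $t\in(0,1)$ is indeed attained in the limit at one of these two endpoints by a sign analysis of the derivative of $h(t):=(1-t^{p-1})(1+t^{2-p})/(1-t)$, which for $1<p<2$ is monotone on $(0,1)$.

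For the upper bound \eqref{inq-2-p-male}, the cleanest route is to use the integral representation of the monotone operator: with $\sigma(\tau):=\eta+\tau(\xi-\eta)$ one has
\[
\bigl\langle |\xi|^{p-2}\xi-|\eta|^{p-2}\eta,\ \xi-\eta\bigr\rangle=\int_0^1 \bigl\langle \tfrac{d}{d\tau}\bigl(|\sigma(\tau)|^{p-2}\sigma(\tau)\bigr),\ \xi-\eta\bigr\rangle\,d\tau,
\]
and the integrand is bounded by $c(p)|\sigma(\tau)|^{p-2}|\xi-\eta|^2$ (from the bound on the derivative of $z\mapsto|z|^{p-2}z$, whose operator norm is comparably $|z|^{p-2}$ with constant $\max\{p-1,1\}=1$ for $p\le 2$; more precisely $\le |z|^{p-2}$ in the eigendirections giving total bound $\le |z|^{p-2}$ times a dimensional-free constant). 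Since $p-2<0$, one has $|\sigma(\tau)|^{p-2}\le$ ... — here lies the only real subtlety: $|\sigma(\tau)|$ can be small even when $|\xi-\eta|$ is not, so a naive bound fails. Instead I would split: if $|\xi-\eta|\le$ some fraction of $\max\{|\xi|,|\eta|\}$ the integrand estimate closes directly, and otherwise (when $|\xi-\eta|$ is comparable to $|\xi|+|\eta|$) one bounds each term crudely, $|\,|\xi|^{p-2}\xi-|\eta|^{p-2}\eta\,|\le |\xi|^{p-1}+|\eta|^{p-1}\lesssim |\xi-\eta|^{p-1}$, and multiplies by $|\xi-\eta|$. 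A slicker alternative avoiding cases: use the elementary inequality $\bigl|\,|\xi|^{p-2}\xi-|\eta|^{p-2}\eta\,\bigr|\le 2^{2-p}|\xi-\eta|^{p-1}$ valid for $1<p\le 2$ (this is itself a standard and short convexity/concavity fact), then Cauchy--Schwarz gives $\mathcal{J}_p\le 2^{2-p}|\xi-\eta|^p$, so $c_2^p=2^{2-p}$ works. The main obstacle throughout is the lower bound: making the reduction to $c=\cos\theta=1$ rigorous and then pinning down the \emph{optimal} constant via the endpoint analysis of $h(t)$ — the upper bound is routine once the right elementary inequality for $|\,|\xi|^{p-2}\xi-|\eta|^{p-2}\eta\,|$ is invoked.
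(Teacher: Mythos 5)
Your outline is correct and, in substance, it is the paper's own route: for \eqref{inq-1-p-male} you reduce to the collinear case by observing that the ratio is a quotient of two functions affine in $c=\cos\theta$ (hence monotone in $c$, extremal at $c=\pm1$), and then you analyse a one-variable function of $t=|\eta|/|\xi|$; for \eqref{inq-2-p-male} you use a pointwise H\"older-type bound on $z\mapsto|z|^{p-2}z$ followed by Cauchy--Schwarz. Two remarks on what you leave unproved, because they are exactly where the paper's work lies. First, the endpoint comparison showing the minimum sits at $c=1$ is fine but should be recorded: it reduces to $(1-t^{p-1})(1+t)\le(1+t^{p-1})(1-t)$, i.e.\ $t\le t^{p-1}$, true for $p\le2$. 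More importantly, computing the two limits $h(0^+)=1$ and $h(1^-)=2(p-1)$ does \emph{not} by itself give $\inf_{[0,1)}h=\min\{1,2(p-1)\}$; you must still prove $h\ge\min\{1,2(p-1)\}$ on all of $(0,1)$. For $p\ge\tfrac32$ this is immediate from $h(t)=1+\frac{t^{2-p}-t^{p-1}}{1-t}$ and $t^{2-p}\ge t^{p-1}$, but for $p<\tfrac32$ your asserted monotonicity of $h$ is not a one-line fact; the clean way (and what the paper does, in the variable $\lambda=1/t$) is to show $\psi(t):=t^{p-1}-t^{2-p}-(3-2p)(1-t)\le0$ via $\psi(1)=\psi'(1)=0$ and a sign check on $\psi''$, i.e.\ a convexity argument rather than a direct sign analysis of $h'$.

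For the upper bound, your ``slicker alternative'' invokes $\bigl|\,|\xi|^{p-2}\xi-|\eta|^{p-2}\eta\,\bigr|\le 2^{2-p}|\xi-\eta|^{p-1}$ as standard; this is true (and sharp, with equality at $\xi=-\eta$), but it is precisely inequality \eqref{suff}, whose proof (mean value theorem plus the case split $|\eta|\gtrless\tfrac12|\xi-\eta|$) is the bulk of the paper's argument for \eqref{inq-2-p-male}. So either cite a reference for that elementary inequality or supply the short proof; your first, derivative-based route indeed needs the case split you describe, since the naive bound on $\int_0^1|\eta+\tau(\xi-\eta)|^{p-2}\,d\tau$ by $|\xi-\eta|^{p-2}$ fails when the segment passes near the origin. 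With these two verifications filled in, your proposal is a complete proof along the same lines as the paper's.
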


\begin{proof}
We start with the proof of~\eqref{inq-1-p-male}   that is a corrected version of \cite[Lemma A.2]{BIV}. We notice first that if $p=2$ or $\eta=0$, there is nothing to prove. Note that for $1<p<2$, putting $\eta=0$ in~\eqref{inq-1-p-male} gives us that $c_1^p \leq 1$. Now, for $\eta\neq 0$ let us transform~\eqref{inq-1-p-male} to the following form
\[ \left( \tfrac{|\xi|^{2-p}}{|\eta|^{2-p}} + \tfrac{|\eta|^{2-p}}{|\xi|^{2-p}} + 2(1 - c^p_1) \right) \langle \xi, \eta \rangle \leq |\xi|^{2 - p}|\eta|^p + |\xi|^p|\eta|^{2 - p} + (1 - c^p_1)\left(|\xi|^2 + |\eta|^2\right). \]
Since $\langle \xi, \eta \rangle \leq |\xi||\eta|$ and the left-hand side of the last display is always positive for $c_1^p \leq 1$, then it suffices to prove the following inequality
\[ \left( \tfrac{|\xi|^{2-p}}{|\eta|^{2-p}} + \tfrac{|\eta|^{2-p}}{|\xi|^{2-p}} + 2(1 - c^p_1) \right) |\xi||\eta| \leq |\xi|^{2 - p}|\eta|^p + |\xi|^p|\eta|^{2 - p} + (1 - c^p_1)\left(|\xi|^2 + |\eta|^2\right), \]
which is equivalent to
\[ \tfrac{|\xi|^{2 - p}}{|\eta|^{2 - p}} + \tfrac{|\xi|^p}{|\eta|^p} + (1 - c_1^p) \left( \tfrac{|\xi|}{|\eta|} - 1 \right)^2 - \left(\tfrac{|\xi|^{2 - p}}{|\eta|^{2 - p}} + \tfrac{|\eta|^{2 - p}}{|\xi|^{2 - p}} \right) \tfrac{|\xi|}{|\eta|} \geq 0.\]

Without loss of generality we may assume that $|\xi| \geq |\eta|$ and then by letting $\lambda = \frac{|\xi|}{|\eta|}$, we need to prove that for $\lambda \geq 1$ the following holds true
\[ f_p(\lambda) := \lambda^{2 - p} + \lambda^p + (1 - c_1^p)(\lambda - 1)^2 - \lambda^{3 - p} - \lambda^{p-1} \geq 0 .\]
Observe that
\[ f_p(\lambda) = (\lambda-1)\big(\lambda^{p-1}-\lambda^{2-p}+(1-c_1^p)(\lambda-1)\big). \]

For $p \in [\frac{3}{2}, 2)$ we have $\lambda^{p-1} \geq \lambda^{2-p}$. Therefore, for every $c_1^p \in [0, 1]$, we have $f_p(\lambda) \geq 0$. The equality in~\eqref{inq-2-p-male} is achieved for $c_1^p = 1$ and $\xi \neq 0$, $\eta = 0$. It proves that this is the optimal constant. \newline

On the other hand, to show that $f_p(\lambda)\geq 0$ also for $p \in (1, \frac{3}{2})$, let us denote
\[ h_p(\lambda) := \lambda^{p-1}-\lambda^{2-p}+(1-c_1^p)(\lambda-1). \]
Note that for $\lambda \geq 1$, $f_p$ has the same sign as $h_p$. We can calculate
\begin{align*}
     h_p'(\lambda)& = (p-1)\lambda^{p-2} - (2-p)\lambda^{1-p} + (1-c_1^p), \\
 h_p''(\lambda) &= (2-p)(p-1)(\lambda^{-p}-\lambda^{p-3}). 
\end{align*}
Since $p < \frac{3}{2}$, we have $\lambda^{-p}-\lambda^{p-3} \geq 0$, which means that $h_p''(\lambda) \geq 0$.
Let $c_1^p = 2(p - 1)$. We have $h_p'(1) = 0$, which in conjunction with $h_p'' \geq 0$ gives us that $h_p' \geq 0$. Finally, $h_p(1) = 0$ implies that $h_p \geq 0$ and $f_p \geq 0$. The constant $c_1^p = 2(p - 1)$ is optimal, because for $\xi = a\eta$ for $a \to 1^{+}$ we have~\eqref{inq-1-p-male} equivalent to $c_1^p \leq 2(p -1)$. This completes the proof of~\eqref{inq-1-p-male}. \newline

To prove~\eqref{inq-2-p-male}, we follow the ideas of~\cite{Db2}. It suffices that there exists $c_2^p > 0$ such that
\begin{equation}
    \label{suff}\left| |\xi|^{p - 2}\xi - |\eta|^{p - 2}\eta \right| \leq c_2^p|\xi - \eta|^{p-1}.
\end{equation}
Indeed, then by the Cauchy--Schwarz inequality we have~\eqref{inq-2-p-male}. In order to show~\eqref{suff} we notice that by triangle inequality we have
\begin{align}
    \label{L=R1+R2}
L:=\left| |\xi|^{p - 2}\xi - |\eta|^{p - 2}\eta \right| \leq \big[|\eta|^{p - 2}|\xi - \eta|\big] + \left| |\xi|^{p-2} - |\eta|^{p-2} \right||\xi|. 
\end{align}

Observe that by the mean value theorem, there exists $t \in [0, 1]$ such that for the second term on the right-hand side in~\eqref{L=R1+R2} we can write
\begin{flalign*} 
\left| |\xi|^{p-2} - |\eta|^{p-2} \right||\xi|=& \left| \frac{\frac{1}{|\xi|^{2-p}} - \frac{1}{|\eta|^{2-p}}}{\frac{1}{|\xi|} - \frac{1}{|\eta|}} \right| \, \left| \frac{1}{|\xi|} - \frac{1}{|\eta|} \right| \, |\xi| = (2-p) \left( \frac{t}{|\xi|} + \frac{1 - t}{|\eta|} \right)^{1-p} \, \left| |\eta| - |\xi| \right| \, \frac{1}{|\eta|} \\
&\leq (2-p) \left( \frac{(1 - t)|\xi| + t|\eta|}{|\xi| |\eta|} \right)^{1-p} \, |\xi - \eta| \, \frac{1}{|\eta|}.\end{flalign*}
Since due to concavity of $s\mapsto s^{p-1}$ we have
\[t|\eta|^{p-1}+(1-t)|\xi|^{p-1}\leq (t|\eta|+(1-t)|\xi|)^{p-1},\]
we can  estimate
\begin{flalign*}
\left| |\xi|^{p-2} - |\eta|^{p-2} \right||\xi|&\leq (2-p)\frac{|\eta|^{p-2}|\xi|^{p-1}|\xi - \eta|}{t|\eta|^{p-1} + (1-t)|\xi|^{p-1}}\\
&= \frac{(2-p)|\xi|^{p-1}}{t|\eta|^{p-1} + (1-t)|\xi|^{p-1}}\left[ |\eta|^{p - 2}|\xi - \eta|\right],
\end{flalign*}
where in the last square brackets in the last display we have the first term of the right-hand side in~\eqref{L=R1+R2}.
Therefore for some  $t \in [0, 1]$ we have
\[ L=\left| |\xi|^{p - 2}\xi - |\eta|^{p - 2}\eta \right| \leq \left[ |\eta|^{p - 2}|\xi - \eta|\right] \left( 1 + \frac{(2-p)|\xi|^{p-1}}{t|\eta|^{p-1} + (1-t)|\xi|^{p-1}} \right) .\]
and, by analogy,  also
\[L \leq\left[ |\xi|^{p-2}|\xi - \eta|\right] \left( 1 + \frac{(2-p)|\eta|^{p-1}}{t'|\xi|^{p-1} + (1-t')|\eta|^{p-1}} \right)\quad \text{for some $t' \in [0, 1]$.} \]
 
Let us consider two cases: $|\eta| > \frac{1}{2}|\xi - \eta|$ and $|\eta| \leq \frac{1}{2}|\xi - \eta|$.

If $|\eta| > \frac{1}{2}|\xi - \eta|$, then the following inequalities hold true
\[ |\eta|^{p-2} \leq 2^{2-p}|\xi - \eta|^{p-2} \qquad\text{and} \qquad |\eta| > \tfrac{1}{3}|\xi|, \]
which imply that
\begin{flalign}\nonumber L\leq |\eta|^{p - 2}|\xi - \eta| \left( 1 + \frac{(2 - p)|\xi|^{p-1}}{t|\eta|^{p-1} + (1-t)|\xi|^{p-1}} \right) &\leq 2^{2 - p}\left( 1 +  \frac{2 - p}{3^{1-p}t + 1 - t} \right)|\xi - \eta|^{p-1}  \\
& \leq 2^{2 - p}(1 + (2-p)3^{p - 1})|\xi - \eta|^{p-1}. \label{L1}
\end{flalign}

If $|\eta| \leq  \frac{1}{2}|\xi - \eta|$, then there holds
\[ |\eta| \leq \tfrac12|\xi-\eta| \leq \tfrac12(|\xi|+|\eta|) \leq |\xi|, \]
which enable us to estimate
\[ \frac{|\eta|^{p-1}}{t'|\xi|^{p-1} + (1 - t')|\eta|^{p - 1}} \leq 1. \]
In this case we also have
\[ |\xi|^{p-2}|\xi - \eta| = |\xi - \eta|^{p-1}\left(\tfrac{|\xi - \eta|}{|\xi|} \right)^{2-p} \leq 2^{2-p}|\xi - \eta|^{p-1}. \]
Altogether
\begin{flalign} L \leq |\xi|^{p-2}|\xi - \eta| \left( 1 + \frac{(2-p)|\eta|^{p-1}}{t'|\xi|^{p-1} + (1-t')|\eta|^{p-1}} \right)\leq 2^{3-p} |\xi-\eta|^{p-1}.\label{L2}
\end{flalign}

Summing it up, by~\eqref{L1} and~\eqref{L2} there exists $c_2^p > 0$ such that~\eqref{suff} holds true from which~\eqref{inq-2-p-male} follows.
\end{proof}

The following facts are proven in 
\cite{Lindqvist}. We correct a flaw in the proof of the second inequality, see Remark~\ref{rem:ce} for details.
\begin{lemma}[$2\leq p<\infty$]\label{lem:p>2} If $2\leq p<\infty$, then  for all $\xi,\eta\in\rn$ we have 
\begin{equation} \label{eq:p>3} \left\langle |\xi|^{p-2}\xi-|\eta|^{p-2}\eta,\xi-\eta\right\rangle \geq \tfrac{1}{2}|\xi-\eta|^2( |\xi|^{p-2}+|\eta|^{p-2}) \end{equation}
and
\begin{equation} \label{eq:3>p>2} \left\langle |\xi|^{p-2}\xi-|\eta|^{p-2}\eta,\xi-\eta\right\rangle \geq C_p|\xi-\eta|^p, \end{equation}
where $C_p=2^{2-p}$ is optimal.\end{lemma}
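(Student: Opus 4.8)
The plan is to prove the two inequalities of Lemma~\ref{lem:p>2} for $2\le p<\infty$ by elementary means, treating the trivial cases first and then reducing to a one-dimensional computation via homogeneity. Both estimates are $(p-1)$-homogeneous in $(\xi,\eta)$, so after discarding the case $\eta=0$ (where both sides are explicit and the inequalities are immediate, with equality in~\eqref{eq:3>p>2}) we may normalise, say $|\eta|\le|\xi|$ and $|\xi|=1$.

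For~\eqref{eq:p>3}, first I would rewrite the left-hand side symmetrically:
\[
\left\langle |\xi|^{p-2}\xi-|\eta|^{p-2}\eta,\xi-\eta\right\rangle
=|\xi|^p+|\eta|^p-(|\xi|^{p-2}+|\eta|^{p-2})\langle\xi,\eta\rangle,
\]
and compare it term by term with $\tfrac12|\xi-\eta|^2(|\xi|^{p-2}+|\eta|^{p-2})=\tfrac12(|\xi|^2+|\eta|^2-2\langle\xi,\eta\rangle)(|\xi|^{p-2}+|\eta|^{p-2})$. After cancelling the identical $-\langle\xi,\eta\rangle(|\xi|^{p-2}+|\eta|^{p-2})$ terms, the desired inequality collapses to the purely scalar statement
\[
|\xi|^p+|\eta|^p\ \ge\ \tfrac12(|\xi|^2+|\eta|^2)(|\xi|^{p-2}+|\eta|^{p-2}),
\]
i.e. $2(a^p+b^p)\ge (a^2+b^2)(a^{p-2}+b^{p-2})$ for $a,b\ge0$. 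Expanding the right side gives $a^p+b^p+a^2b^{p-2}+a^{p-2}b^2$, so this is exactly $a^p+b^p\ge a^2b^{p-2}+a^{p-2}b^2$, equivalently $(a^2-b^2)(a^{p-2}-b^{p-2})\ge0$, which holds since $p\ge2$ makes $t\mapsto t^{p-2}$ nondecreasing. This handles~\eqref{eq:p>3} cleanly.

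For~\eqref{eq:3>p>2} with optimal constant $C_p=2^{2-p}$, I would proceed differently since the right-hand side involves $|\xi-\eta|$ rather than $|\xi|,|\eta|$ separately. Using~\eqref{eq:p>3} as a first step, it suffices to show $\tfrac12|\xi-\eta|^2(|\xi|^{p-2}+|\eta|^{p-2})\ge 2^{2-p}|\xi-\eta|^p$; discarding $\xi=\eta$ and dividing by $|\xi-\eta|^2$, this is $|\xi|^{p-2}+|\eta|^{p-2}\ge 2^{3-p}|\xi-\eta|^{p-2}$. Since $t\mapsto t^{p-2}$ is convex for $p\ge2$ (here one should note $p=2$ is a trivial equality case and assume $p>2$), one has $|\xi-\eta|^{p-2}\le(|\xi|+|\eta|)^{p-2}\le 2^{p-3}(|\xi|^{p-2}+|\eta|^{p-2})$ by the midpoint-convexity bound $(\tfrac{x+y}{2})^{p-2}\le\tfrac12(x^{p-2}+y^{p-2})$, which rearranges to exactly the needed estimate. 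For optimality of $2^{2-p}$, I would exhibit the equality configuration: taking $\eta=-\xi$ makes $|\xi-\eta|=2|\xi|$ and the left side equals $2|\xi|^{p-2}\cdot(2|\xi|)\cdot|\xi|=\dots$; more carefully, with $\eta=-\xi$ the bracketed scalar chain becomes an equality throughout (the convexity bound is tight when $|\xi|=|\eta|$, and~\eqref{eq:p>3} is tight when $\langle\xi,\eta\rangle=-|\xi||\eta|$), so $C_p=2^{2-p}$ cannot be improved.

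The main obstacle, and the point where the correction advertised before the lemma presumably enters, is getting the constant and the optimal configuration right in~\eqref{eq:3>p>2}: one must be careful that the two successive lower bounds (first~\eqref{eq:p>3}, then the convexity estimate) are simultaneously saturated by the \emph{same} choice $\eta=-\xi$, and that the degenerate endpoint $p=2$ is separated out since there $2^{2-p}=1$ and the inequality is the parallelogram-type identity. Everything else is routine scalar calculus (monotonicity and convexity of $t\mapsto t^{p-2}$), so no delicate estimate is needed beyond bookkeeping the equality cases.
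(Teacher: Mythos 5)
Your proof of~\eqref{eq:p>3} is correct and, after the obvious algebraic rearrangement, amounts to the same identity used in the paper: subtracting the right-hand side and cancelling the $\langle\xi,\eta\rangle$ terms reduces the claim to $(a^2-b^2)(a^{p-2}-b^{p-2})\ge0$ with $a=|\xi|$, $b=|\eta|$, which is the nonnegativity of the paper's residual term $\tfrac12(|\xi|^{p-2}-|\eta|^{p-2})(|\xi|^2-|\eta|^2)$. So that half is fine.

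Your argument for~\eqref{eq:3>p>2}, however, has a genuine gap for $2\le p<3$ — and it is precisely the error that Remark~\ref{rem:ce} in this paper was written to correct. You propose to deduce~\eqref{eq:3>p>2} from~\eqref{eq:p>3} by showing
\[
|\xi|^{p-2}+|\eta|^{p-2}\ \ge\ 2^{3-p}\,|\xi-\eta|^{p-2},
\]
and you justify the step $(|\xi|+|\eta|)^{p-2}\le 2^{p-3}\bigl(|\xi|^{p-2}+|\eta|^{p-2}\bigr)$ by ``convexity of $t\mapsto t^{p-2}$ for $p\ge2$''. But $t\mapsto t^{p-2}$ is convex only for $p\ge3$; for $2<p<3$ the exponent $p-2$ lies in $(0,1)$, the map is \emph{concave}, and the midpoint inequality reverses. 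Worse, the intermediate inequality you want is simply false in that range: take $\eta=0$, $\xi\neq0$. Then the left side is $|\xi|^{p-2}$ while the right side is $2^{3-p}|\xi|^{p-2}$, and $2^{3-p}>1$ whenever $p<3$. Equivalently, as the paper's Remark~\ref{rem:ce} notes,
\[
\tfrac12|\xi-\eta|^2\bigl(|\xi|^{p-2}+|\eta|^{p-2}\bigr)=\tfrac12|\xi|^p<2^{2-p}|\xi|^p=2^{2-p}|\xi-\eta|^p
\]
for $2<p<3$ and $\eta=0$, so~\eqref{eq:p>3} is \emph{strictly weaker} than~\eqref{eq:3>p>2} there and cannot imply it. To cover the full range $2\le p<\infty$ you need a direct argument for~\eqref{eq:3>p>2}; the paper does this by splitting on the sign of $|\eta|^2-\langle\xi,\eta\rangle$, using monotonicity in $p$ of $f(p)=\mathcal J_p(\xi,\eta)-2^{2-p}|\xi-\eta|^p$ in one case and a Jensen-type estimate (convexity of $t\mapsto t^{p-1}$, which \emph{is} convex for all $p\ge2$) in the other. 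Your observation that $\eta=-\xi$ gives equality and hence optimality of $C_p=2^{2-p}$ is correct, but it does not repair the proof for $p\in(2,3)$.
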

\begin{proof} Firstly, we prove~\eqref{eq:p>3} following~\cite{Lindqvist}. Let us start with the identity
\begin{equation*}
\left\langle |\xi|^{p-2}\xi-|\eta|^{p-2}\eta,\xi-\eta\right\rangle = \frac{|\xi|^{p-2} + |\eta|^{p-2}}{2}|\xi - \eta|^2 + \frac{(|\xi|^{p-2} - |\eta|^{p-2})(|\xi|^2 - |\eta|^2)}{2}  .
\end{equation*}
As the second component on the right-hand side is always non-negative, we have
\begin{equation*}
\left\langle |\xi|^{p-2}\xi-|\eta|^{p-2}\eta,\xi-\eta\right\rangle \geq \frac{|\xi|^{p-2} + |\eta|^{p-2}}{2}|\xi - \eta|^2,
\end{equation*}
which is the first inequality of our claim. \newline

Now we shall prove~\eqref{eq:3>p>2}. Let us consider two cases associated with the sign of $|\eta|^2 - \langle \xi, \eta \rangle$. Firstly, we assume that $|\eta|^2 - \langle \xi, \eta \rangle \leq 0$. We denote
\[ f(p) := \left\langle |\xi|^{p-2}\xi-|\eta|^{p-2}\eta,\xi-\eta\right\rangle - 2^{2-p}|\xi-\eta|^p. \]
We need to prove that $f \geq 0$. Without loss of generality, by the scaling argument, we may assume that $|\xi| = 1 \geq |\eta|$. Observe that
\[ f'(p) = |\eta|^{p-2}(|\eta|^2-\langle \xi, \eta \rangle)\ln{|\eta|} + 2^{2-p}|\xi-\eta|^p\ln{\frac{2}{|\xi-\eta|}}. \]
Since $|\eta| \leq 1 = |\xi|$, we have $\ln|\eta| \leq 0$ and $\ln{|\xi-\eta|} \leq \ln{(1 + |\eta|)} \leq \ln{2}$, which along with $|\eta|^2 - \langle \xi, \eta \rangle \leq 0$ gives us that $f' \geq 0$. It is easy to see that $f(2) = 0$, which means that $f \geq 0$ and~\eqref{eq:3>p>2} is proved for $|\eta|^2 - \langle \xi, \eta \rangle \leq 0$. \newline

Let us assume that $|\eta|^2 - \langle \xi, \eta \rangle \geq 0$ and restrict to $|\xi| \geq |\eta|$ again. We can transform the left-hand side of~\eqref{eq:3>p>2} in the following way
\[\left \langle |\xi|^{p-2}\xi-|\eta|^{p-2}\eta, \xi - \eta \right\rangle = |\xi|^p + |\eta|^p - \langle \xi, \eta \rangle \left(|\xi|^{p-2}+|\eta|^{p-2}\right) = |\xi|^{p-1}\left(|\xi|-\tfrac{\langle \xi, \eta \rangle}{|\xi|}\right)+|\eta|^{p-1}\left(|\eta|-\tfrac{\langle \xi, \eta \rangle}{|\eta|}\right) \]
\[ = \left(|\xi|+|\eta|-\langle \xi, \eta \rangle\left(\tfrac{1}{|\xi|} + \tfrac{1}{|\eta|}\right) \right)\left( |\xi|^{p-1}\frac{|\xi|-\tfrac{\langle \xi, \eta \rangle}{|\xi|}}{|\xi|+|\eta|-\langle \xi, \eta \rangle\left(\tfrac{1}{|\xi|} + \tfrac{1}{|\eta|}\right)} + |\eta|^{p-1}\frac{|\eta|-\tfrac{\langle \xi, \eta \rangle}{|\eta|}}{|\xi|+|\eta|-\langle \xi, \eta \rangle\left(\tfrac{1}{|\xi|} + \tfrac{1}{|\eta|}\right)} \right). \]
Because of the convexity of $t \mapsto t^{p-1}$, we can use Jensen's inequality to estimate
\[\left \langle |\xi|^{p-2}\xi-|\eta|^{p-2}\eta, \xi - \eta\right \rangle \geq \left(|\xi|+|\eta|-\langle \xi, \eta \rangle\left(\tfrac{1}{|\xi|} + \tfrac{1}{|\eta|}\right) \right)^{2-p}|\xi-\eta|^{2p-2}. \]
Therefore, it is sufficient to prove that
\[ \left(|\xi|+|\eta|-\langle \xi, \eta \rangle\left(\tfrac{1}{|\xi|} + \tfrac{1}{|\eta|}\right) \right)^{2-p}|\xi-\eta|^{2p-2} \geq 2^{2-p}|\xi-\eta|^p, \]
which can be transformed into
\[ |\xi|+|\eta|-\langle \xi, \eta \rangle\left(\tfrac{1}{|\xi|} + \tfrac{1}{|\eta|}\right) \leq 2|\xi-\eta| \]
and then into
\[ (|\xi| + |\eta|)\left(1 - \tfrac{\langle \xi, \eta \rangle}{|\xi||\eta|}\right) \leq 2|\xi-\eta|, \]
which is equivalent to
\begin{equation} \label{eq:sierpien2} (|\xi|+|\eta|)^2\left(1 - \tfrac{\langle \xi, \eta \rangle}{|\xi||\eta|}\right)^2 \leq 4|\xi-\eta|^2. \end{equation}
Let $\lambda = \tfrac{\langle \xi, \eta \rangle}{|\xi||\eta|}$ and observe that
\[ 4|\xi-\eta|^2 - (|\xi|+|\eta|)^2\left(1 - \tfrac{\langle \xi, \eta \rangle}{|\xi||\eta|}\right)^2 = 4\left(|\xi|^2-2\lambda|\xi||\eta| + |\eta|^2\right) - (|\xi|+|\eta|)^2(1-\lambda)^2. \]
Since $\langle \xi, \eta \rangle \leq |\xi||\eta|$, it is easy to see that minimal value of the expression above is achieved for $\lambda=-1$ or $\lambda=1$. For $\lambda = 1$, it is equal to $4(|\xi| - |\eta|)^2 \geq 0$. For $\lambda=-1$ it is equal to $0$. It means that~\eqref{eq:sierpien2} holds true. Therefore,~\eqref{eq:3>p>2} is proved for $|\eta|^2-\langle \xi, \eta \rangle \geq 0$, which is the last case needed. Note that constant $C_p = 2^{2-p}$ is optimal. Indeed, equality in~\eqref{eq:3>p>2} is achieved if $\xi = -\eta$.
\end{proof}

\begin{remark}\label{rem:ce}
In~\cite{Lindqvist}, it is claimed that~\eqref{eq:p>3} is always stronger than~\eqref{eq:3>p>2}, i.e. 
\[ \tfrac12|\xi-\eta|^2\left(|\xi|^{p-2} + |\eta|^{p-2}\right) \geq 2^{2-p}|\xi-\eta|^p. \]
It holds true only for $p \geq 3$. Indeed, for such $p$ we can use convexity of $t \mapsto t^{p-2}$ to estimate
\[ \frac{|\xi|^{p-2}+|\eta|^{p-2}}{2}|\xi-\eta|^2 \geq 2^{2-p}(|\xi|+|\eta|)^{p-2}|\xi-\eta|^2\geq 2^{2-p}|\xi-\eta|^p. \]
However, for $p \in [2, 3)$ it does not hold true for every $\xi, \eta \in \R^n$. Indeed, for $\eta = 0, \xi \neq 0$ we have
\[ 2^{2-p}|\xi-\eta|^p = 2^{2-p}|\xi|^p > \tfrac12|\xi|^p = \tfrac12|\xi-\eta|^2(|\xi|^{p-2}+|\eta|^{p-2}). \]
\end{remark}
 
Let us recall function $\bar V_p^\mu$ defined in \eqref{Vp}. The following relation holds true.
\begin{lemma}\label{lem:Vp2}
If $0 < p < \infty$, then for all $\mu \geq 0$ 
\[\frac{|\bar V_p^\mu(\xi)-\bar V_p^\mu(\eta)|^2}{|\xi-\eta|^2}\approx_p \left(\mu+|\xi|^2+|\eta|^2\right)^\frac{p-2}{2}.\]
\end{lemma}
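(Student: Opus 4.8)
The plan is to reduce the statement to a scalar fact about the real-valued function $t\mapsto (\mu+t^2)^{(p-2)/4}$ combined with elementary vector identities. Since both sides are $2$-homogeneous under simultaneous scaling of $\xi,\eta$ only up to the $\mu$-shift, I would first fix the geometry: write $\xi,\eta\in\rn$ and note that the quantity $|\bar V_p^\mu(\xi)-\bar V_p^\mu(\eta)|^2$ is invariant under simultaneous rotation, so WLOG one may assume $\xi,\eta$ lie in a common $2$-dimensional plane, or even just work with the identity $|\bar V_p^\mu(\xi)-\bar V_p^\mu(\eta)|^2 = |\bar V_p^\mu(\xi)|^2 + |\bar V_p^\mu(\eta)|^2 - 2\langle \bar V_p^\mu(\xi),\bar V_p^\mu(\eta)\rangle$, i.e. $(\mu+|\xi|^2)^{\frac{p-2}{2}}|\xi|^2 + (\mu+|\eta|^2)^{\frac{p-2}{2}}|\eta|^2 - 2(\mu+|\xi|^2)^{\frac{p-2}{4}}(\mu+|\eta|^2)^{\frac{p-2}{4}}\langle\xi,\eta\rangle$. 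The worst case for the lower bound should be $\langle\xi,\eta\rangle=|\xi||\eta|$ (collinear, same direction) and for the upper bound one has the trivial bound $\langle\xi,\eta\rangle\geq -|\xi||\eta|$; so one reduces to the scalar problem with $a=|\xi|\geq 0$, $b=|\eta|\geq 0$, asking whether
\[
(\mu+a^2)^{\frac{p-2}{2}}a^2 + (\mu+b^2)^{\frac{p-2}{2}}b^2 - 2(\mu+a^2)^{\frac{p-2}{4}}(\mu+b^2)^{\frac{p-2}{4}}ab \approx_p (\mu+a^2+b^2)^{\frac{p-2}{2}}(a-b)^2.
\]

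For the scalar inequality I would use the integral representation: write $\Psi_\mu(a):=(\mu+a^2)^{\frac{p-2}{4}}a$, so the left side is $(\Psi_\mu(a)-\Psi_\mu(b))^2$, and by the fundamental theorem of calculus $\Psi_\mu(a)-\Psi_\mu(b)=(a-b)\int_0^1 \Psi_\mu'(b+s(a-b))\,ds$. A direct computation gives $\Psi_\mu'(t) = (\mu+t^2)^{\frac{p-2}{4}} + \frac{p-2}{2}(\mu+t^2)^{\frac{p-6}{4}}t^2 = (\mu+t^2)^{\frac{p-6}{4}}\big(\mu + \tfrac{p}{2}t^2\big)$. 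The key observation is that $\Psi_\mu'(t)\approx_p (\mu+t^2)^{\frac{p-2}{4}}$, uniformly in $\mu\geq0$ and $t\geq0$: for $p\geq 2$ one has $\mu+\tfrac{p}{2}t^2$ between $\mu+t^2$ and $\tfrac{p}{2}(\mu+t^2)$; for $0<p<2$ one has $\mu+\tfrac p2 t^2$ between $\tfrac p2(\mu+t^2)$ (using $p/2<1$) and $\mu+t^2$, and in all cases this is trapped between $\min(1,p/2)(\mu+t^2)$ and $\max(1,p/2)(\mu+t^2)$, which stays positive. Hence $\Psi_\mu(a)-\Psi_\mu(b) = (a-b)\int_0^1 c_p(s)(\mu+(b+s(a-b))^2)^{\frac{p-2}{4}}\,ds$ with $c_p(s)\approx_p 1$, so it remains to show $\int_0^1 (\mu+(b+s(a-b))^2)^{\frac{p-2}{4}}\,ds \approx_p (\mu+a^2+b^2)^{\frac{p-2}{4}}$.

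That last equivalence is the standard fact that averaging $(\mu+|\text{something between }a\text{ and }b|^2)^{\frac{p-2}{4}}$ along the segment is comparable to $(\mu+a^2+b^2)^{\frac{p-2}{4}}$; I would prove it by splitting into $p\geq 2$ and $p<2$. For $p\geq 2$ the integrand is increasing in $|b+s(a-b)|$ so it is bounded above by $(\mu+\max(a,b)^2)^{\frac{p-2}{4}}\leq(\mu+a^2+b^2)^{\frac{p-2}{4}}$, and bounded below by integrating over the half of the segment closer to the endpoint of larger modulus, giving $\gtrsim_p (\mu+\max(a,b)^2)^{\frac{p-2}{4}}\approx (\mu+a^2+b^2)^{\frac{p-2}{4}}$. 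For $p<2$ the exponent is negative so the integrand is largest where $b+s(a-b)$ is smallest; one bounds below by the value at an endpoint and above by integrating and using $\int_0^1 (\text{linear})^{\frac{p-2}{2}}\,ds$-type estimates (or, cleaner, invoke Lemma~\ref{lem:pow-approx}/\ref{lem:Vp} which already establish the $\mu=0$ case and then handle $\mu>0$ by the substitution reducing $(\mu,\xi,\eta)$ to a vector $((\sqrt\mu,\xi),(\sqrt\mu,\eta))$ in $\rn$ with one extra coordinate equal, turning $\bar V_p^\mu$ into $V_p$ on $\R^{n+1}$ and $|\xi-\eta|$ into the same $|\xi-\eta|$). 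This substitution trick is in fact the slickest route: $\bar V_p^\mu(\xi)=V_p((\sqrt\mu,\xi))$ up to the irrelevant extra component direction, so Lemma~\ref{lem:Vp2} follows immediately from the $\mu=0$ statement $|V_p(\zeta)-V_p(\omega)|^2\approx_p(|\zeta|^2+|\omega|^2)^{\frac{p-2}{2}}|\zeta-\omega|^2$ applied to $\zeta=(\sqrt\mu,\xi)$, $\omega=(\sqrt\mu,\eta)$, since $|\zeta-\omega|=|\xi-\eta|$ and $|\zeta|^2+|\omega|^2=2\mu+|\xi|^2+|\eta|^2\approx\mu+|\xi|^2+|\eta|^2$. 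The main obstacle, then, is really just establishing the clean $\mu=0$ comparison $|V_p(\xi)-V_p(\eta)|^2\approx_p(|\xi|^2+|\eta|^2)^{\frac{p-2}{2}}|\xi-\eta|^2$ with explicit control of the direction-dependence (showing the collinear case is extremal), which is exactly the content of Lemmas~\ref{lem:pow-approx} and~\ref{lem:Vp} that the paper proves separately; given those, Lemma~\ref{lem:Vp2} is a one-line corollary via the lifting $\xi\mapsto(\sqrt\mu,\xi)$.
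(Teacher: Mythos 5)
Your ``slickest route'' (the lifting $\zeta=(\sqrt\mu,\xi)$, $\omega=(\sqrt\mu,\eta)\in\R^{n+1}$) has a real gap: the correspondence is not an identity of differences. The first component of $V_p(\zeta)-V_p(\omega)$ equals $\sqrt\mu\big((\mu+|\xi|^2)^{\frac{p-2}{4}}-(\mu+|\eta|^2)^{\frac{p-2}{4}}\big)$, which is nonzero whenever $\mu>0$ and $|\xi|\neq|\eta|$, so that
\begin{equation*}
|V_p(\zeta)-V_p(\omega)|^2=|\bar V_p^\mu(\xi)-\bar V_p^\mu(\eta)|^2+\mu\Big((\mu+|\xi|^2)^{\frac{p-2}{4}}-(\mu+|\eta|^2)^{\frac{p-2}{4}}\Big)^2.
\end{equation*}
Hence the $\mu=0$ statement applied to $\zeta,\omega$ yields only the upper bound for $|\bar V_p^\mu(\xi)-\bar V_p^\mu(\eta)|^2$ for free; the lower bound requires showing in addition that the extra first-component term is dominated by $|\bar V_p^\mu(\xi)-\bar V_p^\mu(\eta)|^2$ (this can be arranged by comparing the derivative of $t\mapsto\sqrt\mu(\mu+t^2)^{\frac{p-2}{4}}$ with that of $W_p^\mu(t)=(\mu+t^2)^{\frac{p-2}{4}}t$, but it is not a one-line corollary). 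Moreover, obtaining the $\mu=0$ case by ``invoking Lemmas~\ref{lem:pow-approx} and~\ref{lem:Vp}'' is circular in this paper: the proof of Lemma~\ref{lem:pow-approx} for $1<p<2$ uses Lemma~\ref{lem:Vp2} (with exponent $2p-2$), and Lemma~\ref{lem:Vp} uses both; those lemmas also cover only $1<p<\infty$, whereas Lemma~\ref{lem:Vp2} is stated, and later needed, for all $0<p<\infty$ (e.g.\ with exponent $2p-2\in(0,2)$, and with $p=1$ in Lemma~\ref{lem:VG}).

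Your first route has a gap at the reduction to the collinear case. Writing $\lambda=\tfrac{\langle\xi,\eta\rangle}{|\xi||\eta|}$, the quantity to be controlled is a quotient of two affine functions of $\lambda$, so its extrema on $[-1,1]$ are attained at $\lambda=\pm1$; this monotonicity must be proved (the paper does it by observing that the numerator of the $\lambda$-derivative is independent of $\lambda$), and then \emph{both} endpoints must be estimated from above and below. Your assertion that the lower bound is worst at $\lambda=1$ while the upper bound follows trivially from $\lambda\geq-1$ is false in general: for $|\xi|\geq|\eta|$ the ratio is increasing in $\lambda$ exactly when $W_p^\mu(|\xi|)/|\xi|>W_p^\mu(|\eta|)/|\eta|$, i.e.\ for every $p>2$, so there the minimum sits at $\lambda=-1$ (e.g.\ $\mu=0$, $p=4$, $|\xi|=2|\eta|$ gives the ratio $5/9$ at $\lambda=-1$ versus $9/5$ at $\lambda=1$). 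Consequently the endpoint inequality $(W_p^\mu(|\xi|)+W_p^\mu(|\eta|))^2\approx_p(\mu+|\xi|^2+|\eta|^2)^{\frac{p-2}{2}}(|\xi|+|\eta|)^2$ is genuinely needed and is not addressed by your scalar computation, which treats only the $(|\xi|-|\eta|)$-case; it is easy, but it is missing. Once the monotonicity in $\lambda$ and the $\lambda=-1$ endpoint are supplied, your FTC argument for the collinear case, based on $\Psi_\mu'(t)\approx_p(\mu+t^2)^{\frac{p-2}{4}}$ and the comparability of the segment average, is sound for all $p>0$ and is essentially the paper's mean value theorem estimate of $F_{p,\mu,\xi,\eta}(1)$.
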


\begin{proof}
In the case $p < 2$ we follow the ideas of~\cite{hamburger}. Let us define function $W_p^\mu$
\[ W_p^\mu(t) = (\mu + t^2)^{\frac{p - 2}{4}}t. \]
It satisfies $|\bar{V}_p^\mu(x)| = W_p^\mu(|x|)$. Observe that 
\[ (W_p^\mu)'(t) = (\mu + t^2)^\frac{p - 6}{4}\left(\mu + \tfrac{pt^2}{2} \right). \]
Since $p>0$, $(W_p^\mu)'(t)$ is positive for $t \geq 0$, and consequently $W_p^\mu$ is strictly increasing. Also, we have
\begin{equation} \label{eq:sierpien0}
 \tfrac{p}{2}(\mu + t^2)^\frac{p - 2}{4} \leq (W_p^\mu)'(t) \leq (\mu + t^2)^\frac{p - 2}{4}.
 \end{equation}
Let us assume that $|\xi| \geq |\eta|$ and let $\lambda = \frac{\langle \xi, \eta \rangle}{|\xi||\eta|}$. Using this notation we have
\[ \frac{|\bar V_p^\mu(\xi)-\bar V_p^\mu(\eta)|^2}{|\xi-\eta|^2(\mu+|\xi|^2+|\eta|^2)^\frac{p-2}{2}} = \frac{W_p^\mu(|\eta|)^2 + W_p^\mu(|\xi|)^2 - 2W_p^\mu(|\eta|)W_p^\mu(|\xi|)\lambda}{(|\xi|^2 + |\eta|^2 - 2|\xi||\eta|\lambda)(\mu+|\xi|^2+|\eta|^2)^\frac{p-2}{2}}=:F_{p,\mu,\xi,\eta}(\lambda). \] 
We will justify that $F_{p,\mu,\xi,\eta}$ is bounded below and above on $[-1,1]$. Observe that
\[ F_{p,\mu,\xi,\eta}'(\lambda) = \frac{2|\xi||\eta|(W_p^\mu(|\eta|)^2 + W_p^\mu(|\xi|)^2) - 2W_p^\mu(|\eta|)W_p^\mu(|\xi|)(|\xi|^2+|\eta|^2) }{(|\xi|^2+|\eta|^2-2|\xi||\eta|\lambda)^2(\mu + |\xi|^2 + |\eta|^2)^{p-2}}. \]
The denominator of the expression above is always positive, while the numerator is independent of $\lambda$, which implies that $F_{p,\mu,\xi,\eta}'$ has constant sign. It gives us that  extremal values of $F_{p,\mu,\xi,\eta}$ are achieved for $\lambda = 1$ and $\lambda = -1$. Therefore, it is sufficient to prove that there exist constants $c, C > 0$ such that
\begin{equation} \label{eq:sierpien4} c \leq
\sqrt{F_{p,\mu,\xi,\eta}(-1)}, \sqrt{F_{p,\mu,\xi,\eta}(1)}
\leq C. \end{equation}
In order to estimate $ \sqrt{F_{p,\mu,\xi,\eta}(-1)}$ we see that
\begin{align*}
1& = \frac{(\mu + |\xi|^2 + |\eta|^2)^\frac{p-2}{4}|\xi| + (\mu + |\xi|^2 + |\eta|^2)^\frac{p-2}{4}|\eta|}{(\mu + |\xi|^2 + |\eta|^2)^\frac{p-2}{4}(|\xi| + |\eta|)} \leq \frac{W_p^\mu(|\xi|) + W_p^\mu(|\eta|)}{(\mu + |\xi|^2 + |\eta|^2)^\frac{p-2}{4}(|\xi| + |\eta|)}\\ & = \sqrt{F_{p,\mu,\xi,\eta}(-1)}\leq \frac{W_p^\mu(|\xi|) + W_p^\mu(|\eta|)}{W_p^\mu(|\xi| + |\eta|)} \leq 2, \end{align*}
where the last inequality follows from the monotonicity of $W_p^\mu$.

Now we shall estimate $\sqrt{F_{p,\mu,\xi,\eta}(1)}$. By applying the mean value theorem, there exists $r \in [|\eta|, |\xi|]$ such that
\[  \sqrt{F_{p,\mu,\xi,\eta}(1)}=\frac{W_p^\mu(|\xi|) - W_p^\mu(|\eta|)}{(\mu + |\xi|^2 + |\eta|^2)^\frac{p - 2}{4}(|\xi| - |\eta|)} = \frac{(W_p^\mu)'(r)}{(\mu + |\xi|^2 + |\eta|^2)^\frac{p - 2}{4}}\,. \] 
Due to~\eqref{eq:sierpien0} we can estimate
\[ \frac{(W_p^\mu)'(r)}{(\mu + |\xi|^2 + |\eta|^2)^\frac{p - 2}{4}} \geq \frac{p}{2} \frac{(\mu + r^2)^\frac{p - 2}{4}}{(\mu + |\xi|^2 + |\eta|^2)^\frac{p - 2}{4}} \geq \frac{p}{2}\,, \]
which is sufficient lower bound. To obtain upper bound, we consider two cases. 

If $3|\eta| \leq |\xi|$, then $|\xi| - |\eta| \geq \frac{1}{2}(|\xi| + |\eta|)$ and due to the monotonicity of $W_p^\mu$, we obtain
\[ \frac{W_p^\mu(|\xi|) - W_p^\mu(|\eta|)}{(\mu + |\xi|^2 + |\eta|^2)^\frac{p - 2}{4}(|\xi| - |\eta|)} \leq \frac{W_p^\mu(|\xi|)}{(\mu + (|\xi| + |\eta|)^2)^\frac{p - 2}{4}\frac{1}{2}(|\xi| + |\eta|)} = \frac{2W_p^\mu(|\xi|)}{W_p^\mu(|\xi| + |\eta|)} \leq 2.\]
On the other hand, if $3|\eta| > |\xi|$ then $|\eta| < r < |\xi|$ implies that $|\xi| < 3r$ and we have $\mu + |\xi|^2 + |\eta|^2 < 10(\mu + r^2)$. Taking into account~\eqref{eq:sierpien0}, we infer that
\[ \frac{W_p^\mu(|\xi|) - W_p^\mu(|\eta|)}{(\mu + |\xi|^2 + |\eta|^2)^\frac{p - 2}{4}(|\xi| - |\eta|)} = \frac{(W_p^\mu)'(r)}{(\mu + |\xi|^2 + |\eta|^2)^\frac{p - 2}{4}} \leq \frac{(\mu + r^2)^\frac{p - 2}{4}}{(\mu + |\xi|^2 + |\eta|^2)^\frac{p - 2}{4}} \leq 10^{\frac{p-2}{4}} \leq 10^\frac{1}{2}. \]
Summing up all cases completes the proof of~\eqref{eq:sierpien4} and therefore, proof of the lemma for $p < 2$.

\bigskip

In the case $p \geq 2$ we follow the ideas of~\cite{giaquinta}.  By direct computation one can verify that
\begin{flalign*} |\bar V_p^\mu(\xi) -\bar V_p^\mu(\eta)| &\leq \int_0^1 \left| \frac{d}{dt}\bar V_p^\mu(t\xi + (1-t)\eta) \right|dt \leq \tfrac{p}{2}|\xi - \eta|  \int_0^1 \left(\mu + |t\xi + (1-t)\eta|^2\right)^\frac{p-2}{4}dt\,\\
& \leq 2^\frac{p-6}{4}p|\xi-\eta|\left(\mu + |\xi|^2 + |\eta|^2\right)^\frac{p-2}{4}. \end{flalign*}
In conclusion, we have one of two inequalities we need to prove. For the reverse inequality, without loss of generality we assume that $|\xi| \geq |\eta|$. We distinguish two cases.  

If $|\xi| \geq 2|\eta|$, we have $|\xi - \eta| \leq \frac{3}{2}|\xi|$ and
\[ \left(\mu + |\xi|^2 + |\eta|^2\right)^\frac{p - 2}{4}|\xi - \eta| \leq 2^\frac{p - 2}{4}\left(\mu + |\xi|^2\right)^\frac{p - 2}{4} \cdot \tfrac{3}{2}|\xi| = 3 \cdot 2^\frac{p-6}{4}|\bar V_p^\mu(\xi)|. \]
Since $|\bar V_p^\mu(\xi)| = W_p^\mu(|\xi|)$ is increasing function of $|\xi|$, we have
\[ |\bar V_p^\mu(\xi) - \bar V_p^\mu(\eta)| \geq |\bar V_p^\mu(\xi)| - |\bar V_p^\mu(\eta)| \geq |\bar V_p^\mu(\xi)| - |\bar V_p^\mu(\tfrac{1}{2}\xi)| \geq \tfrac{1}{2}|\bar V_p^\mu(\xi)|, \]
which is sufficient according to the previous inequality. \newline

If $|\xi| < 2|\eta|$, we have $|t\xi - \eta| \geq |\xi - \eta|$ for every $t \geq 1$. Let us set
\[ L_p^\mu(x) = \left(\mu + |x|^2\right)^\frac{p - 2}{4}.\]
We have
\[ |\bar V_p^\mu(\xi) - \bar V_p^\mu(\eta)| = L_p^\mu(\eta) \left| \tfrac{L_p^\mu(\xi)}{L_p^\mu(\eta)}\xi - \eta \right| \geq L_p^\mu(\eta)|\xi - \eta| \geq \left(\mu + |\xi|^2 + |\eta|^2\right)^\frac{p - 2}{4}|\xi - \eta|5^{\frac{2-p}{4}},\]
which is the last inequality needed to prove the lemma.
\end{proof}
We use Lemma~\ref{lem:Vp2} to prove the following fact.
\begin{lemma}
\label{lem:pow-approx}
If $1<p<\infty$, then 
    \begin{equation}
\label{eq:lem:pow-approx}
\left\langle |\xi|^{p-2}\xi-|\eta|^{p-2}\eta,\xi-\eta\right\rangle\approx_p \big(|\xi|+|\eta|\big)^{p-2}|\xi-\eta|^2.    \end{equation} 
Note that this is impossible for $p\leq 1$. \end{lemma}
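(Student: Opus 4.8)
The plan is to establish the two bounds hidden in $\approx_p$ separately: the upper bound in one stroke for all $1<p<\infty$ via Cauchy--Schwarz together with Lemma~\ref{lem:Vp2}, and the lower bound by splitting into the cases $p\geq 2$ and $1<p\leq 2$, feeding on~\eqref{eq:p>3} and~\eqref{inq-1-p-male} respectively. Throughout I will use the elementary comparison $(|\xi|+|\eta|)^{p-2}\approx_p(|\xi|^2+|\eta|^2)^{\frac{p-2}{2}}$, which follows by raising the chain $(|\xi|^2+|\eta|^2)^{1/2}\leq|\xi|+|\eta|\leq\sqrt 2\,(|\xi|^2+|\eta|^2)^{1/2}$ to the power $p-2$ (of either sign).

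For the upper bound, note that $\bar V_{2p-2}^0(\xi)=(|\xi|^2)^{\frac{(2p-2)-2}{4}}\xi=|\xi|^{p-2}\xi$ and that $2p-2>0$ as soon as $p>1$, so by definition~\eqref{mono-p} and the Cauchy--Schwarz inequality
\[
\mathcal{J}_p(\xi,\eta)=\langle\bar V_{2p-2}^0(\xi)-\bar V_{2p-2}^0(\eta),\,\xi-\eta\rangle\leq|\bar V_{2p-2}^0(\xi)-\bar V_{2p-2}^0(\eta)|\,|\xi-\eta|.
\]
Applying Lemma~\ref{lem:Vp2} with $\mu=0$ and $p$ there replaced by $2p-2$ gives $|\bar V_{2p-2}^0(\xi)-\bar V_{2p-2}^0(\eta)|^2\approx_p(|\xi|^2+|\eta|^2)^{p-2}|\xi-\eta|^2$, whence $\mathcal{J}_p(\xi,\eta)\lesssim_p(|\xi|^2+|\eta|^2)^{\frac{p-2}{2}}|\xi-\eta|^2\approx_p(|\xi|+|\eta|)^{p-2}|\xi-\eta|^2$ by the comparison of the first paragraph.

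For the lower bound with $p\geq 2$, inequality~\eqref{eq:p>3} yields $\mathcal{J}_p(\xi,\eta)\geq\tfrac12|\xi-\eta|^2\big(|\xi|^{p-2}+|\eta|^{p-2}\big)$, and since $t\mapsto t^{p-2}$ is nondecreasing, $|\xi|^{p-2}+|\eta|^{p-2}\geq\max\{|\xi|,|\eta|\}^{p-2}\geq\big(\tfrac{|\xi|+|\eta|}{2}\big)^{p-2}$, so $\mathcal{J}_p(\xi,\eta)\geq 2^{1-p}(|\xi|+|\eta|)^{p-2}|\xi-\eta|^2$. For $1<p\leq 2$, both sides of~\eqref{eq:lem:pow-approx} are symmetric in $\xi,\eta$ and the case $\xi=\eta=0$ is trivial, so we may assume $\xi\neq 0$ and invoke~\eqref{inq-1-p-male}; bounding its denominator by $|\xi|^{2-p}+|\eta|^{2-p}\leq 2\max\{|\xi|,|\eta|\}^{2-p}\leq 2(|\xi|+|\eta|)^{2-p}$ (now using monotonicity of $t\mapsto t^{2-p}$) gives $\mathcal{J}_p(\xi,\eta)\geq\tfrac{c_1^p}{2}(|\xi|+|\eta|)^{p-2}|\xi-\eta|^2$. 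Together with the upper bound this proves $\approx_p$.

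Finally, for the impossibility when $p\leq 1$ it suffices to test $\xi=2\eta$ with $\eta\neq 0$: then $\mathcal{J}_p(\xi,\eta)=(2^{p-1}-1)|\eta|^p\leq 0$, while $(|\xi|+|\eta|)^{p-2}|\xi-\eta|^2=3^{p-2}|\eta|^p>0$, so no lower bound of the claimed type can hold. I do not anticipate a genuine difficulty here, since all the substantial inputs (Lemmas~\ref{lem:p-male},~\ref{lem:p>2} and~\ref{lem:Vp2}) are already available. The only points deserving care are orienting each elementary power comparison in the direction appropriate to its regime ($p-2$ changes sign between the two cases) and remembering that~\eqref{inq-1-p-male} is stated only for $\xi\neq 0$, which is exactly why the symmetry reduction is made explicit.
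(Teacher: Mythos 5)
Your proof is correct, and it reorganizes the argument in a way that differs from the paper mainly in the superquadratic case. The paper treats $p>2$ entirely through the identity $\langle |\xi|^{p-2}\xi-|\eta|^{p-2}\eta,\xi-\eta\rangle = \tfrac{|\xi|^{p-2}+|\eta|^{p-2}}{2}|\xi-\eta|^2 + \tfrac{(|\xi|^{p-2}-|\eta|^{p-2})(|\xi|^2-|\eta|^2)}{2}$: the non-negative cross term gives the lower bound (your use of \eqref{eq:p>3} from Lemma~\ref{lem:p>2} is the same estimate), while the upper bound requires the extra inequality \eqref{eq:sierpien3}, which the paper proves via the one-variable bound $(x^{p-2}-1)(x+1)\lesssim_p(x^{p-2}+1)(x-1)$ justified by computing limits at $1^+$ and $\infty$. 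You bypass \eqref{eq:sierpien3} altogether by observing that $|\xi|^{p-2}\xi=\bar V^0_{2p-2}(\xi)$ and invoking Lemma~\ref{lem:Vp2} with exponent $2p-2$ plus Cauchy--Schwarz uniformly for all $p>1$; the paper uses exactly this device, but only for $1<p<2$. Your route is thus slightly more economical and uniform (one upper-bound argument for the whole range, lower bound split via \eqref{eq:p>3} and \eqref{inq-1-p-male}), at the cost of leaning on Lemma~\ref{lem:Vp2} also in the degenerate range, whereas the paper's $p>2$ case is self-contained given the identity. For $1<p\le 2$ your treatment matches the paper's up to replacing the concavity estimate of the denominator by the equivalent monotonicity bound $|\xi|^{2-p}+|\eta|^{2-p}\le 2(|\xi|+|\eta|)^{2-p}$, and your handling of $\xi\neq 0$ via symmetry is the right precaution. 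Your counterexample $\xi=2\eta$ for $p\le 1$ is cleaner than the paper's (which tests $\eta=c\xi$ and lets $c$ degenerate) and suffices, since exhibiting one pair where the left-hand side is $\le 0$ while the right-hand side is positive already rules out the two-sided equivalence.
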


\begin{proof} If $p=2$, then there is nothing to prove as \eqref{eq:lem:pow-approx} reads $|\xi-\eta|^2\approx |\xi-\eta|^2$. Now we prove the lemma for $p > 2$. Let us start with the identity used in the proof of Lemma~\ref{lem:p>2}
\[\left\langle |\xi|^{p-2}\xi-|\eta|^{p-2}\eta,\xi-\eta\right\rangle = \frac{|\xi|^{p-2} + |\eta|^{p-2}}{2}|\xi - \eta|^2 + \frac{\left(|\xi|^{p-2} - |\eta|^{p-2}\right)\left(|\xi|^2 - |\eta|^2\right)}{2}. \]
Since the last summand of the equality above is always non-negative and we have $|\xi|^{p-2}+|\eta|^{p-2} \approx_p \left(|\xi| + |\eta|\right)^{p-2}$, it is sufficient to prove
\begin{equation} \label{eq:sierpien3} \left(|\xi|^{p-2} - |\eta|^{p-2}\right)\left(|\xi|^2 - |\eta|^2\right) \lesssim_p \left(|\xi|^{p-2}+|\eta|^{p-2}\right)|\xi-\eta|^2. \end{equation}
Without loss of generality we may assume that $|\xi| \geq |\eta|$. Note that for $x \geq 1$ we have
\[ (x^{p-2}-1)(x+1) \lesssim_p (x^{p-2} + 1)(x-1). \]
Indeed, it is immediate consequence of the fact that the following limits are finite and positive
\[ \lim_{x\to1^+} \tfrac{(x^{p-2}-1)(x+1)}{(x^{p-2} + 1)(x-1)} = 2p - 4 ,\quad \lim_{x\to+\infty} \tfrac{(x^{p-2}-1)(x+1)}{(x^{p-2} + 1)(x-1)} = 1.\]
Putting $x = \tfrac{|\xi|}{|\eta|}$ and multiplying by $|\eta|^{p-1}$ give us that
\[ \left(|\xi|^{p-2} - |\eta|^{p-2}\right)(|\xi| + |\eta|) \lesssim_p \left(|\xi|^{p-2}+|\eta|^{p-2}\right)(|\xi| - |\eta|). \]
Therefore
\[ \left(|\xi|^{p-2} - |\eta|^{p-2}\right)\left(|\xi|^2 - |\eta|^2\right) \lesssim_p \left(|\xi|^{p-2}+|\eta|^{p-2}\right)\left(|\xi| - |\eta|\right)^2 \leq \left(|\xi|^{p-2}+|\eta|^{p-2}\right)|\xi - \eta|^2, \]

which proves~\eqref{eq:sierpien3} and ends proof of the lemma for $p \geq 2$. \newline

For $1<p < 2$ let us use~\eqref{inq-1-p-male} to state that
\[  c_1^p \frac{|\xi - \eta|^2}{|\xi|^{2-p} + |\eta|^{2-p}} \leq \left\langle |\xi|^{p-2}\xi-|\eta|^{p-2}\eta,\xi-\eta\right\rangle.\]
By the concavity of $t \mapsto t^{2-p}$, we have
\[ \frac{1}{|\xi|^{2-p} + |\eta|^{2-p}} \geq 2^{1-p}\big(|\xi|+|\eta|\big)^{p-2}, \]
which is enough to state that
\[ \big(|\xi|+|\eta|\big)^{p-2}|\xi-\eta|^2 \lesssim_p \left\langle |\xi|^{p-2}\xi-|\eta|^{p-2}\eta,\xi-\eta\right\rangle.  \]
Moreover, we have
\begin{equation} \label{eq:wrzesien0}
\big(|\xi|^2+|\eta|^2\big)^\frac{p-2}{2} \approx_p \big(|\xi| + |\eta| \big)^{p-2}.  
\end{equation}
Indeed
\[ \tfrac{|\xi| + |\eta|}{2} \leq \sqrt{\tfrac{|\xi|^2 + |\eta|^2}{2}} \leq \sqrt{\tfrac{|\xi|^2 + 2|\xi||\eta|+ |\eta|^2}{2}} = \tfrac{|\xi| + |\eta|}{\sqrt{2}},\] which raised to the power  $p-2$ immediately gives~\eqref{eq:wrzesien0}.
Therefore, by Lemma~\ref{lem:Vp2} with $\mu = 0$ and $\bar p = 2p - 2$ and~\eqref{eq:wrzesien0}, we have
\[ \big| |\xi|^{p-2}\xi - |\eta|^{p-2}\eta \big| \lesssim_p |\xi - \eta| \left(|\xi|^2 + |\eta|^2\right)^\frac{p-2}{2} \lesssim_p |\xi - \eta|(|\xi| + |\eta|)^{p-2}. \]
By the Cauchy--Schwarz inequality we get
\[\left \langle |\xi|^{p-2}\xi-|\eta|^{p-2}\eta,\xi-\eta\right\rangle \lesssim_p |\xi - \eta|^2(|\xi| + |\eta|)^{p-2}, \]
which is the last needed inequality.\newline

If $p=1$, then the left-hand side of~\eqref{eq:lem:pow-approx} vanishes for $\eta=c\xi$, $c\neq0$, but the right-hand side is positive in this case. 
If $p < 1$, then for $\xi = c\eta$ and $c \to 0^+$, the left-hand side of~\eqref{eq:lem:pow-approx} is $|\xi|^p(1-c)(1-c^{p-1})$ and converges to $\infty$ and the right-side therein is $|\xi|^p(1-c)^2(1+c)^{p-2}$, which converges to $|\xi|^p$. \end{proof}
The last two lemmas used together give the following result.
\begin{lemma}\label{lem:Vp}
If $1\leq p<\infty$, then  for all $\xi,\eta\in\rn$ we have
\[\left\langle |\xi|^{p-2}\xi-|\eta|^{p-2}\eta,\xi-\eta\right\rangle\approx_p |V_p(\xi)-V_p(\eta)|^2.\]
\end{lemma}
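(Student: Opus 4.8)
The plan is to combine Lemma~\ref{lem:pow-approx} with Lemma~\ref{lem:Vp2} applied at a shifted exponent. First I would invoke Lemma~\ref{lem:pow-approx}, which gives, for $1<p<\infty$,
\[
\left\langle |\xi|^{p-2}\xi-|\eta|^{p-2}\eta,\xi-\eta\right\rangle\approx_p \big(|\xi|+|\eta|\big)^{p-2}|\xi-\eta|^2.
\]
Next I would handle the right-hand side $|V_p(\xi)-V_p(\eta)|^2$ by recognizing that $V_p=\bar V_p^0$ and applying Lemma~\ref{lem:Vp2} with $\mu=0$, which yields
\[
|V_p(\xi)-V_p(\eta)|^2\approx_p \big(|\xi|^2+|\eta|^2\big)^\frac{p-2}{2}|\xi-\eta|^2.
\]
Finally, the elementary equivalence $\big(|\xi|^2+|\eta|^2\big)^\frac{p-2}{2}\approx_p\big(|\xi|+|\eta|\big)^{p-2}$ — which is exactly \eqref{eq:wrzesien0}, obtained from $\tfrac{|\xi|+|\eta|}{2}\le\sqrt{\tfrac{|\xi|^2+|\eta|^2}{2}}\le\tfrac{|\xi|+|\eta|}{\sqrt 2}$ raised to the power $p-2$ — chains the two displays together to give the claim for $1<p<\infty$.

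The only remaining point is the boundary case $p=1$, which is included in the statement ($1\le p<\infty$) but not covered by Lemma~\ref{lem:pow-approx} (valid for $1<p<\infty$). For $p=1$ one checks the asserted equivalence directly: $V_1(\xi)=|\xi|^{-1/2}\xi$, so $|V_1(\xi)-V_1(\eta)|^2=|\xi|+|\eta|-2\langle\xi,\eta\rangle/\sqrt{|\xi||\eta|}$, while $\left\langle |\xi|^{-1}\xi-|\eta|^{-1}\eta,\xi-\eta\right\rangle=|\xi|+|\eta|-\langle\xi,\eta\rangle\big(\tfrac{1}{|\xi|}+\tfrac{1}{|\eta|}\big)$; writing $\lambda=\langle\xi,\eta\rangle/(|\xi||\eta|)\in[-1,1]$ and $s=\sqrt{|\xi|/|\eta|}$ one reduces both sides (up to the common factor $|\eta|$, and after normalizing) to checking that $s+s^{-1}-2\lambda$ and $s^2+s^{-2}-\lambda(s^2+s^{-2})\cdot$(appropriate rescaling) are comparable uniformly in $s>0$, $\lambda\in[-1,1]$, which is a one-variable estimate with no singularity since both vanish precisely when $\xi=\eta$. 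Degenerate configurations ($\xi$ or $\eta$ equal to $0$) are trivial.

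I expect the main (and in fact only) obstacle to be bookkeeping rather than conceptual: ensuring that the constants in Lemmas~\ref{lem:pow-approx} and~\ref{lem:Vp2} depend only on $p$ (they do, by inspection of those proofs) and treating $p=1$ separately since Lemma~\ref{lem:pow-approx} explicitly fails there. Everything else is a direct concatenation of already-established equivalences, so the proof should be short.
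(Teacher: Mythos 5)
For $1<p<\infty$ your argument coincides with the paper's proof: it is exactly the concatenation of Lemma~\ref{lem:Vp2} with $\mu=0$, Lemma~\ref{lem:pow-approx}, and the elementary equivalence \eqref{eq:wrzesien0}, so that part is correct and identical in approach.

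Your separate treatment of the endpoint $p=1$, however, contains a genuine error, and the patch cannot be repaired: the asserted two-sided equivalence is false at $p=1$. Take $\eta=c\xi$ with $c>0$, $c\neq 1$, $\xi\neq 0$. Then $|\xi|^{-1}\xi-|\eta|^{-1}\eta=0$, so $\mathcal{J}_1(\xi,\eta)=0$, while $|V_1(\xi)-V_1(\eta)|^2=\bigl(\sqrt{|\xi|}-\sqrt{|\eta|}\bigr)^2>0$. In the notation of your sketch, with $\lambda=\langle\xi,\eta\rangle/(|\xi||\eta|)$ one has $\mathcal{J}_1(\xi,\eta)=(|\xi|+|\eta|)(1-\lambda)$ and $|V_1(\xi)-V_1(\eta)|^2=|\xi|+|\eta|-2\lambda\sqrt{|\xi||\eta|}$; the first expression vanishes on the whole set $\lambda=1$ (all positively proportional pairs), not only at $\xi=\eta$, so your claim that both quantities vanish precisely when $\xi=\eta$ fails for the left-hand side, and no uniform lower bound of $\mathcal{J}_1$ by $|V_1(\xi)-V_1(\eta)|^2$ is possible. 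Only the one-sided estimate $\mathcal{J}_1(\xi,\eta)\leq 2\,|V_1(\xi)-V_1(\eta)|^2$ survives at $p=1$. This is in line with the observation recorded after \eqref{eq:lem:pow-approx} that the left-hand side there vanishes for $\eta=c\xi$. Note also that the paper's own proof invokes Lemma~\ref{lem:pow-approx} and therefore covers only $1<p<\infty$, so the range $1\leq p<\infty$ in the statement is not justified there either; you were right to flag the endpoint, but the correct resolution is to exclude $p=1$ from the equivalence (or retain only the one-sided bound), not to verify it directly.
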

 
\begin{proof}
Since by Lemma~\ref{lem:Vp2} we have 
\[ |V_p(\xi)-V_p(\eta)|^2 \approx_p \big(|\xi|^2 + |\eta|^2\big)^{\frac{p-2}{2}}|\xi-\eta|^2\]
and by Lemma~\ref{lem:pow-approx}
we have \[\left\langle |\xi|^{p-2}\xi-|\eta|^{p-2}\eta,\xi-\eta\right\rangle \approx_p \big(|\xi| + |\eta| \big)^{p-2}|\xi-\eta|^2, \]
equivalence \eqref{eq:wrzesien0} completes the proof. \end{proof}
If one is interested in precise values of a constant in Lemma~\ref{lem:Vp}, we provide an easy proof for $p\geq 2$ due to~\cite{Lindqvist}.
\begin{remark}\label{rem:Vp}
If $2\leq p<\infty$, then for all $\xi, \eta \in \R^N$ we have
\[\langle |\xi|^{p-2}\xi-|\eta|^{p-2}\eta,\xi-\eta\rangle\geq \tfrac{4}{p^2}|V_p(\xi)-V_p(\eta)|^2.\]
\end{remark}
\begin{proof}
 Note that
\begin{flalign*}
 |\xi|^{p - 2}\xi - |\eta|^{p -2}\eta &= \int_{0}^{1} \frac{d}{dt}|\eta + t(\xi - \eta)|^{p-2}(\eta + t(\xi - \eta))dt \\ &= (\xi - \eta)\int_{0}^{1} |\eta + t(\xi - \eta)|^{p - 2}dt\\
 &\quad+ (p - 2)\int_{0}^{1} |\eta + t(\xi - \eta)|^{p - 4}\langle \eta + t(\xi - \eta), \xi - \eta \rangle (\eta + t(\xi - \eta))dt. \end{flalign*}
By applying it the left-hand side of our claim, we have
\begin{flalign}\nonumber\langle |\xi|^{p-2}\xi-|\eta|^{p-2}\eta,\xi-\eta \rangle  &= |\xi - \eta|^2\int_{0}^{1} |\eta + t(\xi - \eta)|^{p - 2}dt\\
&\quad + (p - 2)\int_{0}^{1} |\eta + t(\xi - \eta)|^{p - 4}\big(\langle \eta + t(\xi - \eta), \xi - \eta \rangle\big)^2 dt.\label{split} \end{flalign}
By the Cauchy--Schwarz inequality we have following inequality  
\[ 0 \leq \int_{0}^{1} |\eta + t(\xi - \eta)|^{p - 4}\big(\langle \eta + t(\xi - \eta), \xi - \eta \rangle\big)^2 dt \leq |\xi - \eta|^2\int_{0}^{1} |\eta + t(\xi - \eta)|^{p - 2}dt. \]
Thus, by~\eqref{split} we may write that
\[ |\xi - \eta|^2\int_{0}^{1} |\eta + t(\xi - \eta)|^{p - 2}dt \leq \langle |\xi|^{p-2}\xi-|\eta|^{p-2}\eta,\xi-\eta \rangle \leq (p-1)|\xi - \eta|^2\int_{0}^{1} |\eta + t(\xi - \eta)|^{p - 2}dt. \]
Note that again by the Cauchy--Schwarz inequality and the mean value theorem we infer that 
\[ |\xi - \eta| \int_{0}^{1} |\eta + t(\xi - \eta)|^{p - 2}dt \leq \big||\xi|^{p-2}\xi-|\eta|^{p-2}\eta\big| \leq (p-1) |\xi - \eta| \int_{0}^{1} |\eta + t(\xi - \eta)|^{p - 2}dt. \]
Replacing $p$ by $\frac{p + 2}{2}\geq 2$ in the first inequality in the last display we get
\begin{flalign*}
 |V_p(\xi)-V_p(\eta)|^2 & = \big||\xi|^{\frac{p - 2}{2}}\xi - |\eta|^{\frac{p - 2}{2}}\eta\big|^2 \leq \frac{p^2}{4}|\xi - \eta|^2 \left( \int_{0}^{1} |\eta + t(\xi - \eta)|^{\frac{p-2}{2}}dt \right)^2 \\
& \leq \frac{p^2}{4}|\xi - \eta|^2  \int_{0}^{1} |\eta + t(\xi - \eta)|^{p-2}dt \leq \frac{p^2}{4} \langle |\xi|^{p-2}\xi-|\eta|^{p-2}\eta,\xi-\eta \rangle,
\end{flalign*} 
which completes the proof.
\end{proof}

\section{General growth}\label{sec:G}
In this section we concentrate on controlling the monotonicity quantity $\mathcal{J}_G$ defined in~\eqref{mono-G} related to the study of the operator from~\eqref{eq:A} involving an $N$-function $G\in\Delta_2\cap\nabla_2$, which is continuously differentiable, and its derivative $g=G'$. 
 One may observe analogy between Lemmas~\ref{lem:Vp2} and~\ref{lem:VG} and between Lemmas~\ref{lem:pow-approx} and~\ref{lem:VG2}. We present much more elementary proofs than those accessible in literature.

In the next proofs we will make use of the following simple fact.
\begin{lemma}\label{lem:t-p}
For every $p > 0$ and $t \geq 1$, we have $1 - t^{-p} \leq p(t-1)$.
\end{lemma}

The following lemma yields the result related to~\cite[Lemma 20]{DiEt}, but the proof is essentially more elementary and does not require $G$ to be twice differentiable. 
\begin{lemma} Suppose $G\in\Delta_2\cap \nabla_2$, then for all $ \xi,\eta\in\rn$ we have \label{lem:VG}  \begin{equation*}
\frac{g(|\xi|+|\eta|)}{|\xi|+|\eta|} \approx_{i_G, s_G} \frac{|\V(\xi)-\V(\eta)|^2}{|\xi-\eta|^2}.
\end{equation*}
\end{lemma}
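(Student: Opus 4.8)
The statement to prove is the equivalence
\[
\frac{g(|\xi|+|\eta|)}{|\xi|+|\eta|} \approx_{i_G, s_G} \frac{|\V(\xi)-\V(\eta)|^2}{|\xi-\eta|^2},
\]
and the natural strategy is to reduce it to the power-growth result already established in Lemma~\ref{lem:Vp2}. The key idea is that $\V(\xi)=\sqrt{G(|\xi|)/|\xi|^2}\,\xi$ and, on any interval of comparable scales, the $N$-function $G$ behaves like a power function with exponent between $i_G$ and $s_G$; hence locally $\V$ looks like $\bar V_{\bar p}^\mu$ for a suitable $\bar p$. Concretely, I would first dispose of the trivial case $\xi=\eta$, and then assume without loss of generality $|\xi|\ge|\eta|$ (so in particular $\xi\ne 0$; the case $\eta=0$ should be checked directly, where the left side is $g(|\xi|)/|\xi|\approx G(|\xi|)/|\xi|^2$ by Corollary~\ref{cor:g-delta2} and the right side is exactly $G(|\xi|)/|\xi|^2$).

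\textbf{Main step: pointwise reduction.} Write $\V(\xi)=W(|\xi|)\,\xi/|\xi|$ where $W(t):=\sqrt{G(t)}$ for $t>0$ (so $W(t)/t=\sqrt{G(t)}/t$ is exactly the scalar factor in $\V$). As in the proof of Lemma~\ref{lem:Vp2}, by fixing $\lambda=\langle\xi,\eta\rangle/(|\xi||\eta|)$ one reduces the quotient $|\V(\xi)-\V(\eta)|^2/|\xi-\eta|^2$ to a function of $\lambda$ that is monotone, so its extremal values occur at $\lambda=\pm1$; hence it suffices to bound the two quantities
\[
\frac{W(|\xi|)+W(|\eta|)}{|\xi|+|\eta|}\qquad\text{and}\qquad \frac{W(|\xi|)-W(|\eta|)}{|\xi|-|\eta|}
\]
from above and below by $\sqrt{g(|\xi|+|\eta|)/(|\xi|+|\eta|)}$, up to constants depending only on $i_G,s_G$. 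For the first quantity, use that $W$ is increasing and doubling: $W(|\xi|)+W(|\eta|)\approx W(|\xi|+|\eta|)=\sqrt{G(|\xi|+|\eta|)}$, and then $G(t)/t^2\approx g(t)/t$ by Corollary~\ref{cor:g-delta2}. For the second quantity, the mean value theorem gives $(W(|\xi|)-W(|\eta|))/(|\xi|-|\eta|)=W'(r)=g(r)/(2\sqrt{G(r)})$ for some $r\in[|\eta|,|\xi|]$; one then rewrites $g(r)^2/G(r)\approx g(r)/r$ (again by Lemma~\ref{lem:iG-sG}/Corollary~\ref{cor:g-delta2}) and must compare $g(r)/r$ with $g(|\xi|+|\eta|)/(|\xi|+|\eta|)$. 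This is where one splits into cases $|\xi|\ge 3|\eta|$ versus $|\xi|<3|\eta|$, exactly as in Lemma~\ref{lem:Vp2}: in the second case $r$, $|\xi|$ and $|\xi|+|\eta|$ are all comparable, so $g(r)/r\approx g(|\xi|+|\eta|)/(|\xi|+|\eta|)$ by the $\Delta_2\cap\nabla_2$ estimates; in the first case $|\xi|-|\eta|\gtrsim|\xi|+|\eta|$ and the difference quotient is dominated by $W(|\xi|)/(|\xi|+|\eta|)\approx W(|\xi|+|\eta|)/(|\xi|+|\eta|)$, handled as before.

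\textbf{Expected main obstacle.} The delicate point is uniformity of all comparison constants: the ratios $G(t)/t^2$, $g(t)/t$, $g(r)^2/G(r)$ on the subinterval $[|\eta|,|\xi|]$ must be controlled by the single value $g(|\xi|+|\eta|)/(|\xi|+|\eta|)$ with constants depending only on $i_G,s_G$, not on the spread $|\xi|/|\eta|$. The monotonicity properties in~\eqref{comp-i_G-s_G} — namely that $G(t)/t^{i_G}$ is nondecreasing and $G(t)/t^{s_G}$ is nonincreasing — together with $i_GG(t)\le g(t)t\le s_GG(t)$ are precisely what make this work: they give $g(r)/r\approx g(s)/s$ whenever $r\approx s$, and they control the one-sided comparison when $r\le |\xi|+|\eta|$ by powers of the ratio. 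An alternative, slightly slicker route would be to invoke Lemma~\ref{lem:Vp2} directly on dyadic scales: on $[|\eta|,|\xi|]$ with $|\xi|\approx|\eta|$, approximate $G$ by $c\,t^{\bar p}$ with $\bar p\in[i_G,s_G]$ and apply Lemma~\ref{lem:Vp2} with $\mu=0$, then patch; but the case-splitting above is self-contained and avoids any approximation argument, so I would present that.
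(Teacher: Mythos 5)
Your route is genuinely different from the paper's. The paper does not rerun any $\lambda$-argument: it observes the identity $\V(\xi)=V_1\bigl(\tfrac{G(|\xi|)}{|\xi|}\xi\bigr)$ and applies Lemma~\ref{lem:Vp2} with $p=1$, $\mu=0$ to the vectors $\tfrac{G(|\xi|)}{|\xi|}\xi$ and $\tfrac{G(|\eta|)}{|\eta|}\eta$, which reduces the whole lemma to the scalar comparison $\bigl|\tfrac{G(|\xi|)}{|\xi|}\xi-\tfrac{G(|\eta|)}{|\eta|}\eta\bigr|\approx_{i_G,s_G}\tfrac{G(|\xi|)}{|\xi|}|\xi-\eta|$ (for $|\xi|\ge|\eta|$), proved with the triangle inequality, monotonicity of $s\mapsto G(s)/s$, $\Delta_2$, and Lemma~\ref{lem:t-p}. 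You instead redo the mechanism of Lemma~\ref{lem:Vp2} itself (constancy of the sign of the derivative in $\lambda$, extremes at $\lambda=\pm1$, mean value theorem) with $W=\sqrt G$; this is viable and self-contained, while the paper's version is shorter because the delicate part is black-boxed in the already proven power-case lemma.

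There is, however, one genuine hole in your sketch: the lower bound at $\lambda=1$ in the case $|\xi|\ge 3|\eta|$. There the MVT point $r\in[|\eta|,|\xi|]$ is not under control, and your plan to ``compare $g(r)/r$ with $g(|\xi|+|\eta|)/(|\xi|+|\eta|)$'' cannot yield a constant depending only on $i_G,s_G$: if $s_G>2$ and $r$ is close to $|\eta|\ll|\xi|$, then $g(r)/r$ may be smaller than $g(|\xi|)/|\xi|$ by a power of $|\eta|/|\xi|$, and the one-sided monotonicity in \eqref{comp-i_G-s_G} only bounds the ratio by powers of the spread, which is unbounded. In Lemma~\ref{lem:Vp2} this issue never appears because $p<2$ fixes the sign of the exponent, so the lower bound is case-free; in the Orlicz setting there is no such sign, and in the spread case you must bound the difference quotient directly rather than through $W'(r)$. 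The fix is short and uses only tools you already cite: for $|\xi|\ge3|\eta|$, \eqref{comp-i_G-s_G} gives $G(|\eta|)\le G(|\xi|/3)\le 3^{-i_G}G(|\xi|)$, hence
\[
\frac{W(|\xi|)-W(|\eta|)}{|\xi|-|\eta|}\;\ge\;\frac{\bigl(1-3^{-i_G/2}\bigr)\sqrt{G(|\xi|)}}{|\xi|+|\eta|}\;\gtrsim_{i_G,s_G}\;\frac{\sqrt{G(|\xi|+|\eta|)}}{|\xi|+|\eta|}\;\approx_{i_G,s_G}\;\Bigl(\tfrac{g(|\xi|+|\eta|)}{|\xi|+|\eta|}\Bigr)^{1/2},
\]
where the last two steps use $|\xi|+|\eta|\le\tfrac43|\xi|$, $\Delta_2$, and Corollary~\ref{cor:g-delta2}. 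With this inserted (and the boundary checks $\eta=0$ and $|\xi|=|\eta|$ you already mention), your argument closes with constants depending only on $i_G$ and $s_G$.
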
 
\begin{proof} 
    Observe that when in definition of $V_p$, i.e. \eqref{Vp}, one takes $p=1$ and a vector $G(|\xi|)\tfrac{\xi}{|\xi|}$, it holds that that\begin{equation}
    \label{VG=V1}
    \V(\xi) = V_1\left(\frac{G(|\xi|)}{|\xi|}\xi\right)\qquad\text{for every $\xi \in \R^n$}.
\end{equation}  
Hence,    Lemma~\ref{lem:Vp2} gives us that
    \begin{align*}
    |\V(\xi) - \V(\eta)|^2 &= \big|V_1\left(\tfrac{G(|\xi|)}{|\xi|}\xi\right) - V_1\left(\tfrac{G(|\eta|)}{|\eta|}\eta\right) \big|^2 \\
    & \approx \big| \tfrac{G(|\xi|)}{|\xi|}\xi - \tfrac{G(|\eta|)}{|\eta|}\eta \big|^2 \left( G(|\xi|)^2 + G(|\eta|)^2 \right)^{-\frac{1}{2}}. 
    \end{align*}
    Without loss of generality, we may assume that $|\xi| \geq |\eta|$, which gives us that
    \[ |\V(\xi) - \V(\eta)|^2 \approx \big| \tfrac{G(|\xi|)}{|\xi|}\xi - \tfrac{G(|\eta|)}{|\eta|}\eta \big|^2 \cdot \tfrac{1}{G(|\xi|)}. \]
    Moreover, by Lemma~\ref{lem:iG-sG} and Corollary~\ref{cor:g-delta2} we have
    \[ \tfrac{g(|\xi| + |\eta|)}{|\xi| + |\eta|} \approx_{i_G, s_G} \tfrac{g(|\xi|)}{|\xi|} \approx_{i_G, s_G} \tfrac{G(|\xi|)}{|\xi|^2}, \]
    therefore it suffices now to show that 
    \[ \big| \tfrac{G(|\xi|)}{|\xi|}\xi - \tfrac{G(|\eta|)}{|\eta|}\eta \big|^2 \tfrac{1}{G(|\xi|)} 
    \approx_{i_G, s_G} \tfrac{G(|\xi|)}{|\xi|^2}|\xi - \eta|^2, \]
    which can be simplified into
    \begin{align}
        \label{fin}
    \big| \tfrac{G(|\xi|)}{|\xi|}\xi - \tfrac{G(|\eta|)}{|\eta|}\eta \big| \approx_{i_G, s_G} \tfrac{G(|\xi|)}{|\xi|}|\xi - \eta|. 
    \end{align}
    
    Now we shall prove inequality `$\lesssim$'  in~\eqref{fin}. Let us consider two cases.\\
    Assume $|\eta|\leq |\xi| \leq 2|\eta|$. For every $t \geq 1$, $|t\xi - \eta| \geq |\xi - \eta|$, which by monotonicity of $s \mapsto \frac{G(s)}{s}$ implies that
    \[ \left| \frac{\frac{G(|\xi|)}{|\xi|}}{\frac{G(|\eta|)}{|\eta|}}\xi - \eta \right| \geq |\xi - \eta| \geq \tfrac{1}{c_{\Delta_2}} \tfrac{|\eta|}{|\xi|} \tfrac{G(|\xi|)}{G(|\eta|)}|\xi-\eta|, \]
    which is sufficient for our inequality. \\
    If $|\xi| > 2|\eta|$, then by triangle inequality we have
    \begin{align*}
         \left| \tfrac{G(|\xi|)}{|\xi|}\xi - \tfrac{G(|\eta|)}{|\eta|}\eta \right|& \geq \tfrac{G(|\xi|)}{|\xi|}  |\xi| - \tfrac{G(|\eta|)}{|\eta|}  |\eta|\geq \tfrac{G(|\xi|)}{|\xi|}  \tfrac{|\xi|}{2} \geq \tfrac{G(|\xi|)}{|\xi|} \tfrac{|\xi - \eta|}{4}. 
    \end{align*}
    The first  inequality needed for~\eqref{fin} is proved. 
    
    In order to show $\gtrsim$ in~\eqref{fin}, observe that for $|\xi|\geq |\eta|$ it holds
    \[ \left| \xi - \frac{\tfrac{G(|\eta|}{|\eta|}}{\tfrac{G(|\xi|)}{|\xi|}}\eta \right| \leq |\xi - \eta| + |\eta| \left( 1 - \frac{\tfrac{G(|\eta|}{|\eta|}}{\frac{G(|\xi|)}{|\xi|}}\right).  \]
    Moreover, in order to estimate the last term we might use that $|\xi| - |\eta|\leq |\xi - \eta| $, which means that this is sufficient to prove the following 
    \begin{equation}
        \label{est}
    1 - \frac{\tfrac{G(|\eta|)}{|\eta|}}{\tfrac{G(|\xi|)}{|\xi|}} \lesssim_{i_G, s_G} \tfrac{|\xi|}{|\eta|} - 1. 
    \end{equation}
    Indeed, then \[ \left| \xi - \frac{\tfrac{G(|\eta|}{|\eta|}}{\tfrac{G(|\xi|)}{|\xi|}}\eta \right| \leq |\xi - \eta| + c(i_G, s_G)(|\xi|-|\eta|)\lesssim_{i_G, s_G}  |\xi - \eta|.  \]
    In order to show~\eqref{est} we use Lemma~\ref{lem:t-p} together with the fact that $G\in\Delta_2$, which gives us
    \[ 1 - \frac{\tfrac{G(|\eta|)}{|\eta|}}{\tfrac{G(|\xi|)}{|\xi|}} \leq 1 - \left(\tfrac{|\eta|}{|\xi|} \right)^{s_G - 1} \leq (s_G - 1)\left( \tfrac{|\xi|}{|\eta|} - 1 \right). \]
    This ends proof of the second inequality needed for~\eqref{fin}, and hence the proof of the lemma is complete. 
    
\end{proof} 
By Lemma~\ref{lem:VG} we can conclude the following fact.
\begin{corollary}
 Suppose  $G\in\Delta_2\cap \nabla_2$, then for all $ \xi,\eta\in\rn$ we have
    \label{lem:g-VsqrtG}\[g(|\xi|+|\eta|)|\xi-\eta|\approx_{i_G, s_G} G^\frac{1}{2}(|\xi|+|\eta|) |\V(\xi)-\V(\eta)|.\]
\end{corollary}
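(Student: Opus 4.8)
The plan is to deduce Corollary~\ref{lem:g-VsqrtG} directly from Lemma~\ref{lem:VG} by an elementary algebraic manipulation together with the comparison estimates for $G$ in $\Delta_2\cap\nabla_2$. Starting from Lemma~\ref{lem:VG}, which asserts
\[
\frac{g(|\xi|+|\eta|)}{|\xi|+|\eta|}\approx_{i_G,s_G}\frac{|\V(\xi)-\V(\eta)|^2}{|\xi-\eta|^2},
\]
I would multiply both sides by $(|\xi|+|\eta|)^2|\xi-\eta|^2$ and take square roots, obtaining
\[
g(|\xi|+|\eta|)^{1/2}(|\xi|+|\eta|)\,|\xi-\eta|\approx_{i_G,s_G}(|\xi|+|\eta|)\cdot\frac{|\xi-\eta|}{|\xi-\eta|}\cdot|\V(\xi)-\V(\eta)|
\]
— more carefully, $\sqrt{g(|\xi|+|\eta|)}\,(|\xi|+|\eta|)\,|\xi-\eta|\approx_{i_G,s_G}(|\xi|+|\eta|)\,|\V(\xi)-\V(\eta)|\cdot\sqrt{|\xi|+|\eta|}$ after rearranging, so that
\[
\sqrt{g(|\xi|+|\eta|)}\,|\xi-\eta|\approx_{i_G,s_G}|\V(\xi)-\V(\eta)|.
\]
Actually the cleanest route is: from Lemma~\ref{lem:VG} we get $|\V(\xi)-\V(\eta)|\approx_{i_G,s_G}\sqrt{\tfrac{g(|\xi|+|\eta|)}{|\xi|+|\eta|}}\,|\xi-\eta|$, hence
\[
G^{1/2}(|\xi|+|\eta|)\,|\V(\xi)-\V(\eta)|\approx_{i_G,s_G}\frac{G^{1/2}(|\xi|+|\eta|)\,g^{1/2}(|\xi|+|\eta|)}{(|\xi|+|\eta|)^{1/2}}\,|\xi-\eta|.
\]

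The remaining task is therefore to show
\[
\frac{G^{1/2}(|\xi|+|\eta|)\,g^{1/2}(|\xi|+|\eta|)}{(|\xi|+|\eta|)^{1/2}}\approx_{i_G,s_G}g(|\xi|+|\eta|),
\]
equivalently, writing $t=|\xi|+|\eta|$, that $G(t)g(t)/t\approx_{i_G,s_G}g(t)^2$, i.e. $G(t)/t\approx_{i_G,s_G}g(t)$. But this is exactly the content of Corollary~\ref{cor:g-delta2} (combined with Lemma~\ref{lem:1} or Lemma~\ref{lem:iG-sG}), which gives $g(t)\approx_{i_G,s_G}G(t)/t$ for $G\in\Delta_2\cap\nabla_2$. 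Chaining the two equivalences yields the claim. I would present this in two or three short displayed lines, citing Lemma~\ref{lem:VG} for the first equivalence and Corollary~\ref{cor:g-delta2} for the substitution $G(t)/t\approx_{i_G,s_G}g(t)$, being careful that every factor appearing is strictly positive (which holds since $\xi,\eta$ are not simultaneously zero — and indeed the statement is only meaningful when $|\xi|+|\eta|>0$, otherwise both sides vanish).

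There is essentially no obstacle here: the corollary is a formal consequence of Lemma~\ref{lem:VG} and the already-established two-sided bound $g(t)\approx_{i_G,s_G}G(t)/t$. The only point requiring a modicum of care is bookkeeping of the exponents under the square root, and noting that the $\approx$ relation is preserved under multiplication by a common positive factor and under taking positive powers (here the power $1/2$), with the intrinsic constants changing only in a way that still depends solely on $i_G$ and $s_G$. I would state explicitly that constants are raised to the power $1/2$ when the square root is taken, so the dependence on $i_G,s_G$ is maintained, and then the proof is complete.
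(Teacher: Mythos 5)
Your argument is correct and follows essentially the same route as the paper: both take the square root of the equivalence in Lemma~\ref{lem:VG} and then absorb the factor via $g(t)\approx_{i_G,s_G}G(t)/t$ (Lemma~\ref{lem:iG-sG}/Corollary~\ref{cor:g-delta2}), with the paper multiplying through by $g^{1/2}(|\xi|+|\eta|)$ and you by $G^{1/2}(|\xi|+|\eta|)$, which is the identical computation. The somewhat garbled first display in your write-up is superseded by the clean version you give afterwards, so no changes are needed beyond tidying that up.
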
 
\begin{proof}
    From Lemma~\ref{lem:VG} we have \[g^\frac{1}{2}(|\xi|+|\eta|)|\xi-\eta|\approx_{i_G, s_G} |\V(\xi)-\V(\eta)|(|\xi|+|\eta|)^\frac{1}{2},\]
    and therefore
    \[g(|\xi|+|\eta|)|\xi-\eta|\approx_{i_G, s_G} |\V(\xi)-\V(\eta)|\left((|\xi|+|\eta|)g(|\xi|+|\eta|)\right)^\frac{1}{2}.\]
    The claim follows from Lemma~\ref{lem:iG-sG}.
\end{proof}
The following lemma presents the result analogous to Lemma~\ref{lem:pow-approx}.
\begin{lemma}
Suppose $G\in\Delta_2\cap \nabla_2$, then for all $ \xi,\eta\in\rn$ we have  \label{lem:VG2}
\[\left\langle \tfrac{G(|\xi|)}{|\xi|^2}\xi-\tfrac{G(|\eta|)}{|\eta|^2}\eta,\xi-\eta\right\rangle\approx_{i_G, s_G} \tfrac{g(|\xi|+|\eta|)}{|\xi|+|\eta|}|\xi-\eta|^2.\]    
\end{lemma}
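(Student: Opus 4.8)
The plan is to follow the route of Lemma~\ref{lem:Vp2} (and the way it feeds into Lemma~\ref{lem:pow-approx}): reduce the vectorial equivalence to a one–dimensional problem governed by the cosine $\lambda:=\langle\xi,\eta\rangle/(|\xi|\,|\eta|)$. Abbreviate $\mathcal{J}_G:=\langle\tfrac{G(|\xi|)}{|\xi|^2}\xi-\tfrac{G(|\eta|)}{|\eta|^2}\eta,\xi-\eta\rangle$ as in~\eqref{mono-G}. First the degenerate cases: if $\xi=0$ or $\eta=0$ both sides of the claim vanish unless, up to symmetry, $\eta=0\neq\xi$, in which case $\mathcal{J}_G=G(|\xi|)$ and the right-hand side equals $g(|\xi|)\,|\xi|$, so the claim is precisely Lemma~\ref{lem:iG-sG}. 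So assume $\xi,\eta\neq0$ and, without loss of generality, $s:=|\xi|\geq|\eta|=:t>0$, and put $\lambda\in[-1,1]$ as above. The key observation is that in the variable $\lambda$ both
\[\mathcal{J}_G=G(s)+G(t)-\big(\tfrac{G(s)}{s^2}+\tfrac{G(t)}{t^2}\big)st\,\lambda\qquad\text{and}\qquad|\xi-\eta|^2=s^2+t^2-2st\,\lambda\]
are affine. Hence $\lambda\mapsto\mathcal{J}_G\big/\!\big(\tfrac{g(s+t)}{s+t}|\xi-\eta|^2\big)$ is a M\"obius function of $\lambda$ whose derivative has constant sign, so it is monotone on $[-1,1]$; its only pole sits at $\lambda=(s^2+t^2)/(2st)\geq1$, so for $s>t$ there is no pole in $[-1,1]$, while for $s=t$ the ratio collapses to the constant $2G(s)/(s\,g(2s))$, which is $\approx_{i_G,s_G}1$ by Corollary~\ref{cor:g-delta2} and Lemma~\ref{lem:iG-sG}. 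Consequently it suffices to check $\mathcal{J}_G\approx_{i_G,s_G}\tfrac{g(s+t)}{s+t}|\xi-\eta|^2$ at the two endpoints $\lambda=\pm1$.

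At $\lambda=-1$ one computes $\mathcal{J}_G=(s+t)\big(\tfrac{G(s)}{s}+\tfrac{G(t)}{t}\big)$ and $\tfrac{g(s+t)}{s+t}|\xi-\eta|^2=(s+t)\,g(s+t)$, so the equivalence reduces to $\tfrac{G(s)}{s}+\tfrac{G(t)}{t}\approx_{i_G,s_G}g(s+t)$; this is immediate because $t\mapsto G(t)/t$ is nondecreasing (convexity of $G$ with $G(0)=0$), so $\tfrac{G(s)}{s}+\tfrac{G(t)}{t}\approx\tfrac{G(s)}{s}$, while $\tfrac{G(s)}{s}\approx_{i_G,s_G}g(s)\approx_{i_G,s_G}g(s+t)$ by Corollary~\ref{cor:g-delta2} (using $s\leq s+t\leq2s$ and Lemma~\ref{lem:iG-sG}). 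At $\lambda=1$ one gets $\mathcal{J}_G=(s-t)\big(\tfrac{G(s)}{s}-\tfrac{G(t)}{t}\big)$ and $\tfrac{g(s+t)}{s+t}|\xi-\eta|^2=\tfrac{g(s+t)}{s+t}(s-t)^2$, so it remains to prove
\[\frac{G(s)}{s}-\frac{G(t)}{t}\approx_{i_G,s_G}\frac{g(s+t)}{s+t}(s-t)\qquad\text{for }0<t\leq s.\]

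To prove this scalar equivalence I would write $\tfrac{G(s)}{s}-\tfrac{G(t)}{t}=\int_t^s\tfrac{\tau g(\tau)-G(\tau)}{\tau^2}\,d\tau$ and use $i_GG(\tau)\leq\tau g(\tau)\leq s_GG(\tau)$ (Lemma~\ref{lem:iG-sG}), where the strict inequality $i_G>1$ is crucial, to sandwich the integral between $(i_G-1)\int_t^s\tfrac{G(\tau)}{\tau^2}\,d\tau$ and $(s_G-1)\int_t^s\tfrac{G(\tau)}{\tau^2}\,d\tau$. Then I would show $\int_t^s\tfrac{G(\tau)}{\tau^2}\,d\tau\approx_{i_G,s_G}\tfrac{G(s)}{s^2}(s-t)\approx_{i_G,s_G}\tfrac{g(s+t)}{s+t}(s-t)$ by the dyadic split: if $t\geq s/2$ then $\tfrac{G(\tau)}{\tau^2}\approx_{i_G,s_G}\tfrac{G(s)}{s^2}$ uniformly on $[t,s]$ by Corollary~\ref{cor:g-delta2}; if $t<s/2$ then $s-t\approx s$, the upper bound $\tfrac{G(s)}{s}-\tfrac{G(t)}{t}\leq\tfrac{G(s)}{s}$ is of the right order, and $\int_t^s\tfrac{G(\tau)}{\tau^2}\,d\tau\geq\int_{s/2}^s\tfrac{G(\tau)}{\tau^2}\,d\tau\gtrsim_{i_G,s_G}\tfrac{G(s)}{s}$ gives the matching lower bound. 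All the comparisons used between $G(\tau)/\tau$, $G(\tau)/\tau^2$ and $g(\tau)$ at comparable arguments are furnished by Lemma~\ref{lem:iG-sG} and Corollary~\ref{cor:g-delta2}. (A case-free alternative for the lower bound: combine the monotonicity of $G(\tau)/\tau^{s_G}$ with the elementary inequality $1-u^{a}\geq\min(1,a)(1-u)$, $u\in[0,1]$, $a>0$, applied with $u=t/s$, $a=s_G-1$.)

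The soft parts are the reduction to $\lambda=\pm1$ and the endpoint $\lambda=-1$. The main obstacle is the endpoint $\lambda=1$, that is, the two-sided control of $\tfrac{G(s)}{s}-\tfrac{G(t)}{t}$ by $\tfrac{g(s+t)}{s+t}(s-t)$: this is exactly the step where the doubling indices enter essentially ($i_G>1$ for the lower bound, $s_G<\infty$ for the upper one) and where one genuinely has to treat the regimes $t\approx s$ and $t\ll s$ separately — in contrast with the superquadratic case $i_G\geq2$, where $t\mapsto G(t)/t^2$ is monotone and no splitting is needed.
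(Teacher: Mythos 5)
Your proof is correct, but it follows a genuinely different route from the paper's. The paper gets the lower bound by combining Lemma~\ref{lem:VG} with the identity $\V(\xi)=V_1\big(\tfrac{G(|\xi|)}{|\xi|}\xi\big)$ and Lemma~\ref{lem:Vp} at $p=1$, then comparing the auxiliary product $I_2=\big\langle \tfrac{\xi}{|\xi|}-\tfrac{\eta}{|\eta|},\tfrac{G(|\xi|)}{|\xi|}\xi-\tfrac{G(|\eta|)}{|\eta|}\eta\big\rangle$ with $\mathcal{J}_G$ through a case split on the sign of $\langle\xi,\eta\rangle$; the upper bound is Cauchy--Schwarz plus the pointwise estimate $\big|\tfrac{G(|\xi|)}{|\xi|^2}\xi-\tfrac{G(|\eta|)}{|\eta|^2}\eta\big|\lesssim_{i_G,s_G}\tfrac{G(|\xi|)}{|\xi|^2}|\xi-\eta|$ via Lemma~\ref{lem:t-p}. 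You instead transplant the M\"obius-in-$\lambda$ monotonicity device from the proof of Lemma~\ref{lem:Vp2} directly onto $\mathcal{J}_G$ (both $\mathcal{J}_G$ and $|\xi-\eta|^2$ are affine in the cosine $\lambda$, and the pole lies outside $[-1,1]$ unless $|\xi|=|\eta|$), reducing everything to the endpoints $\lambda=\pm1$; the only substantive step is then the scalar two-sided bound $\tfrac{G(s)}{s}-\tfrac{G(t)}{t}\approx_{i_G,s_G}\tfrac{g(s+t)}{s+t}(s-t)$, which you obtain from the fundamental theorem of calculus and $i_GG(\tau)\leq\tau g(\tau)\leq s_GG(\tau)$, with a clean dyadic split. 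This buys a self-contained argument that bypasses Lemmas~\ref{lem:VG} and~\ref{lem:Vp} entirely (only Lemma~\ref{lem:iG-sG} and Corollary~\ref{cor:g-delta2} are used) and makes transparent exactly where $i_G>1$ and $s_G<\infty$ enter, whereas the paper's route is shorter given its already-established toolkit and produces the chain through $\V$ that Theorem~\ref{theo:main} needs anyway. One small slip, in a clearly optional aside: your ``case-free alternative'' for the lower bound should invoke the monotonicity of $G(\tau)/\tau^{i_G}$ with exponent $a=i_G-1$ (giving $\tfrac{G(t)}{t}\leq\tfrac{G(s)}{s}(t/s)^{i_G-1}$), not $G(\tau)/\tau^{s_G}$ with $a=s_G-1$, which yields an inequality in the wrong direction; the main argument is unaffected.
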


\begin{proof}We denote\[I_1:= \left\langle \tfrac{G(|\xi|)}{|\xi|^2}\xi-\tfrac{G(|\eta|)}{|\eta|^2}\eta,\xi-\eta\right\rangle. \]

    Let us start with showing inequality `$\gtrsim$'. By Lemma~\ref{lem:VG} it is sufficient to prove that
    \[I_1\gtrsim_{i_G, s_G} |\V(\xi)-\V(\eta)|^2\approx_{i_G, s_G} \tfrac{g(|\xi|+|\eta|)}{|\xi|+|\eta|}|\xi-\eta|^2.\]
    
     Noting that $\V(\xi) = V_1(\frac{G(|\xi|)}{|\xi|}\xi)$ and using Lemma~\ref{lem:Vp} for $p=1$ and vectors $\frac{G(|\xi|)}{|\xi|}\xi$ and $\frac{G(|\eta|)}{|\eta|}\eta$, we may write
    \[ I_2:= \left\langle \tfrac{\xi}{|\xi|} - \tfrac{\eta}{|\eta|}, \tfrac{G(|\xi|)}{|\xi|}\xi - \tfrac{G(|\eta|)}{|\eta|}\eta \right\rangle \gtrsim_{i_G, s_G}|\V(\xi)-\V(\eta)|^2.\]
    Therefore, it is enough to prove that $I_2\lesssim I_1$. Notice that
    \[I_2= G(|\xi|) + G(|\eta|) - \langle \xi, \eta \rangle  \tfrac{G(|\xi|) + G(|\eta|)}{|\xi| \, |\eta|} \]
    and, on the other hand, we have
    \[ I_1=
    G(|\xi|) + G(|\eta|) - \langle \xi, \eta \rangle \left( \tfrac{G(|\xi|)}{|\xi|^2} + \tfrac{G(|\eta|)}{|\eta|^2} \right). \]
    Without loss of generality, we may assume that $|\eta| \leq |\xi|$. We know that $\tfrac{G(|\eta|)}{|\eta|} \leq \tfrac{G(|\xi|)}{|\xi|},$     which by multiplication by $\tfrac{|\xi| - |\eta|}{|\xi|\,|\eta|}$ and rearranging terms leads to
    %
    %
    %
    %
    %
    \[ \tfrac{G(|\xi|)}{|\xi|^2} + \tfrac{G(|\eta|)}{|\eta|^2} \leq \tfrac{G(|\xi|) + G(|\eta|)}{|\xi| \, |\eta|}. \]
    Therefore, if $\langle \xi, \eta \rangle \geq 0$, then we immediately have $I_2\leq I_1$. 
    %
    %
    On the other hand, for $\langle \xi, \eta \rangle < 0$, observe that
    \begin{align*}
         I_2 &\leq 2(G(|\xi|) + G(|\eta|)) \\
         &\leq 2\left( G(|\xi|) + G(|\eta|) - \langle \xi, \eta \rangle   \left( \tfrac{G(|\xi|)}{|\xi|^2} + \tfrac{G(|\eta|)}{|\eta|^2} \right)\right)=2I_1. 
    \end{align*}
    Summing up, we have  $|\V(\xi)-\V(\eta)|^2 \lesssim_{i_G, s_G} I_2 \lesssim I_1$ and the first inequality of the claim is proven.\newline
    
    To complete the proof, it remains to show inequality `$\lesssim$', that is 
    \begin{equation}
        \label{lipiec1}
    \left\langle \tfrac{G(|\xi|)}{|\xi|^2}\xi-\tfrac{G(|\eta|)}{|\eta|^2}\eta,\xi-\eta\right\rangle \lesssim_{i_G, s_G} \tfrac{g(|\xi|+|\eta|)}{|\xi|+|\eta|}|\xi-\eta|^2, 
    \end{equation}
    which we prove for $|\eta| \leq |\xi|$.     By the Cauchy--Schwarz inequality, we have
    \[ I_1 \leq \left| \tfrac{G(|\xi|)}{|\xi|^2}\xi-\tfrac{G(|\eta|)}{|\eta|^2}\eta \right| |\xi - \eta|. \]
    Moreover, since $|\xi|+|\eta|\approx|\xi|$ and by Lemma~\ref{lem:iG-sG}, it holds that
    \[\tfrac{g(|\xi|+|\eta|)}{|\xi|+|\eta|}|\xi-\eta|^2\approx_{i_G, s_G} \tfrac{G(|\xi|)}{|\xi|^2}|\xi-\eta|^2. \]
    Therefore, to have~\eqref{lipiec1}  it is sufficient to prove that
    \[ \left| \tfrac{G(|\xi|)}{|\xi|^2}\xi-\tfrac{G(|\eta|)}{|\eta|^2}\eta \right| = |\xi - \eta| \lesssim_{i_G, s_G} \tfrac{G(|\xi|)}{|\xi|^2}|\xi-\eta|^2, \]
    which is equivalent to
    \begin{equation}
        \label{lipiec2}
    \left| \xi-\frac{\tfrac{G(|\eta|)}{|\eta|^2}}{\tfrac{G(|\xi|)}{|\xi|^2}}\eta \right| \lesssim_{i_G, s_G} |\xi-\eta|. 
    \end{equation}
    We will prove it in two cases: when  ${\frac{G(|\eta|)}{|\eta|^2}} \geq {\frac{G(|\xi|)}{|\xi|^2}}$ and  ${\frac{G(|\eta|)}{|\eta|^2}} < {\frac{G(|\xi|)}{|\xi|^2}}$. By triangle inequality it holds
    \[ \left| \xi-\frac{\frac{G(|\eta|)}{|\eta|^2}}{\frac{G(|\xi|)}{|\xi|^2}}\eta \right| \leq |\xi - \eta| + |\eta|  \left| 1 - \frac{\frac{G(|\eta|)}{|\eta|^2}}{\frac{G(|\xi|)}{|\xi|^2}} \right| = |\xi - \eta| + |\eta|\left( \frac{\frac{G(|\eta|)}{|\eta|^2}}{\frac{G(|\xi|)}{|\xi|^2}} - 1 \right)=:II. \]
    
    Suppose that ${\frac{G(|\eta|)}{|\eta|^2}} \geq {\frac{G(|\xi|)}{|\xi|^2}}$. In order to estimate the right-hand side in the last display we use the fact that  $\tfrac{G(|\eta|)}{G(|\xi|)} \leq \tfrac{|\eta|}{|\xi|}$, which leads to
    \[ \frac{\frac{G(|\eta|)}{|\eta|^2}}{\frac{G(|\xi|)}{|\xi|^2}} - 1 \leq \frac{|\xi|}{|\eta|} - 1.\]
    Then
    \[ II\leq |\xi - \eta| + |\eta|\left(\tfrac{|\xi|}{|\eta|} - 1\right) \leq 2|\xi-\eta| \]
    and~\eqref{lipiec2} holds true.  
    
    Let us assume the opposite, i.e. ${\frac{G(|\eta|)}{|\eta|^2}} <\frac{G(|\xi|)}{|\xi|^2}$. By Lemma~\ref{lem:iG-sG} we have 
    \[ 1 - \frac{\frac{G(|\eta|)}{|\eta|^2}}{\frac{G(|\xi|)}{|\xi|^2}} \leq 1 - \left( \frac{|\eta|}{|\xi|}\right)^{s_G - 2}. \]
    We assume that the left-hand side above is positive, therefore $s_G - 2 > 0$. In turn, by Lemma~\ref{lem:t-p}, we have
    \[ 1 - \frac{\frac{G(|\eta|)}{|\eta|^2}}{\frac{G(|\xi|)}{|\xi|^2}} \leq 1 - \left( \frac{|\eta|}{|\xi|}\right)^{s_G - 2} \leq (s_G - 2) \left( \frac{|\xi|}{|\eta|} - 1\right) \]
    and we can estimate 
    \[II \leq |\xi-\eta| + (s_G-2)|\eta|\left(\tfrac{|\xi|}{|\eta|} - 1\right) \lesssim_{s_G} |\xi-\eta|.\]
    It ends proof of~\eqref{lipiec2}. Therefore, `$\lesssim$' inequality of the claim is proved. 
\end{proof}

We are in a position to prove our main result, that is Theorem~\ref{theo:main}, by summarizing the above facts.

\begin{proof}[Proof of Theorem~\ref{theo:main}] Let us recall $G_a$ defined in~\eqref{G-shift}.  We will prove now a following relation\begin{equation}
    \label{eq:1line}\tfrac{\G(\xi,\eta)}{|\xi-\eta|^2}\approx_{i_G, s_G}\tfrac{G_{|\xi|}(|\xi-\eta|)}{|\xi-\eta|^2}\approx_{i_G, s_G} \tfrac{g(|\xi|+|\eta|)}{|\xi|+|\eta|}.
\end{equation}Observe that the derivative of $G_a$  is
\begin{equation}\label{ga}
g_a(s) = \frac{g(a+s)}{a+s}s.
\end{equation}
Given definition of $\G$, i.e.~\eqref{cG}, and the fact that $G_a(t)\approx_{i_G, s_G} g_a(t)t$, which holds due to Lemma~\ref{lem:iG-sG}, we infer that \begin{equation}\label{Ga}
    G_a(t)\approx_{i_G, s_G}\begin{cases}G(t),&t\geq a,\\
    \tfrac{G(a)}{a^2}t^2,& t\leq a,
    \end{cases}=\G(a,t).
\end{equation} 
Using~\eqref{ga}, we have
\begin{flalign}\label{equiv3}
\tfrac{G_{|\xi|}(|\xi-\eta|)}{|\xi-\eta|^2}&\approx_{i_G, s_G} \tfrac{g_{|\xi|}(|\xi - \eta|)}{|\xi - \eta|} =  \tfrac{g(|\xi| + |\xi - \eta|)}{|\xi| + |\xi - \eta|} \approx_{i_G, s_G} \tfrac{G(|\xi| + |\xi - \eta|)}{(|\xi| + |\xi - \eta|)^2}.
\end{flalign}
Since $G\in\Delta_2\cap \nabla_2$ and $|\xi| + |\xi - \eta| \approx |\xi| + |\eta|,$ we have that
\begin{flalign*}
\tfrac{G_{|\xi|}(|\xi-\eta|)}{|\xi-\eta|^2}&\approx_{i_G, s_G} \tfrac{G(|\xi| + |\eta|)}{(|\xi| + |\eta|)^2} \approx_{i_G, s_G} \tfrac{g(|\xi| + |\eta|)}{|\xi| + |\eta|},
\end{flalign*}
which ends proof of~\eqref{eq:1line}. Then \eqref{eq:1line} in conjunction with Lemma~\ref{lem:VG} immediately gives us~\eqref{equiv-theo:main}.
\end{proof}



\end{document}